\newcommand{\field}[1]{\mathbb{#1}}
\newcommand{\tf}{time-frequency}
\newcommand{\stft}{short-time Fourier transform}
\newcommand{\bR}{\field{R}}
\newcommand{\bZ}{\field{Z}}
\newcommand{\aaf}{A_a^{\varphi_1,\varphi_2}}
\newcommand{\modsp}{modulation space}
\newcommand{\aw}{Anti-Wick}
\def\endproof{\hfill\vbox{\hrule height0.6pt\hbox{%
			\vrule height1.3ex width0.6pt\hskip0.8ex
			\vrule width0.6pt}\hrule height0.6pt
	}\medskip}
\newcommand{\tfr}{time-frequency representation}
\def\rd{\bR^d}
\def\rd{\mathbb{R}^d}
\def\f{\varphi}
\def\la{\lambda}
\def\cS{\mathcal{S}}
\def\a{\aleph}
\def\rd{\bR^d}
\def\rdd{{\bR^{2d}}}
\def\zdd{{\bZ^{2d}}}
\def\lrd{L^2(\rd)}
\def\lrdd{L^2(\rdd)}
\def\zd{\bZ^d}
\def\intrd{\int_{\rd}}
\def\intrdd{\int_{\rdd}}
\def\<{\left<}
\def\>{\right>}
\def\mv1{M_v^1}
\def\Mmpq{M_m^{p,q}}
\def\phas{(x,\omega )}
\def\o{\omega}
\def\a{\alpha}
\def\b{\beta}
\def\z{\zeta}
\def\Ren{\mathbb{R}^d}
\def\Renn{\mathbb{R}^{2d}}
\def\sch{\mathcal{S}}
\def\la{\langle}
\def\ra{\rangle}
\def\vs{v_s}
\def\gaw{A^{\f_1,\f_2}_a}
\newcommand{\tfs}{time-frequency shift}
\newtheorem{theorem}{Theorem}[section]
\newtheorem{lemma}[theorem]{Lemma}
\newtheorem{corollary}[theorem]{Corollary}
\newtheorem{proposition}[theorem]{Proposition}
\newtheorem{definition}[theorem]{Definition}
\newtheorem{remark}[theorem]{Remark}
\begin{document}

\title{Note on the Wigner distribution and Localization Operators in the quasi-Banach setting}

\author{Elena Cordero}
\address{Universit\`a di Torino, Dipartimento di
	Matematica, via Carlo Alberto 10, 10123 Torino, Italy} \email{elena.cordero@unito.it}
\thanks{}
%
%

\subjclass[2010]{Primary 47G30, Secondary 35S05} 
\keywords{Time-frequency analysis, Short-time Fourier transfrom, Wigner distribution, modulation spaces}

\begin{abstract}Time-frequency analysis have played a crucial role in the development of localization operators in the last twenty years. We present  its applications to the study of boundedness and Schatten Class property for such operators.  In particular,  new sufficient conditions for such operators to belong to the  Schatten-von Neumann Class $S_p(\lrd)$, $0<p<1$, are exhibited.
	As a byproduct, sharp continuity results for the Wigner distribution are also presented.
	\end{abstract}

\maketitle


\section{Introduction}
Localization operators have  a long-standing tradition among physicists, mathematicians and engineers. A special form of such operators called   ``Anti-Wick operators'' had been used as a
quantization procedure  by Berezin~\cite{Berezin71,Shubin91} in 1971.
The terminology ``Time-frequency localization operators'' or simply ``localization operators'' is due to Daubechies, who wrote the popular papers \cite{DB1,DB2} appeared in 1988. From then onwards so many authors have written contributions on this topic that it is not possible to cite them all. In this note we shall focus on the time-frequency properties of such operators and we will  exhibit the results known so far. Much has been done in terms of necessary and sufficient conditions for boundedness of such operators on suitable normed spaces, as well as their belonging to the Schatten-von Neumann Class $ S_p(\lrd)$, $1<p\leq \infty$. Here we focus on the quasi-Banach setting $0<p<1$ and present outcomes in this framework, while reviewing also the known results for the Banach case $p\geq 1$.

 First, we introduce the main features of this study.

The protagonists of time-frequency\, analysis are the operators of
translation and modulation defined by
\begin{equation}
\label{eqi1}
T_xf(t)=f(t-x)\quad{\rm and}\quad M_{\o}f(t)= e^{2\pi i \o
	t}f(t), \, \quad f\in\lrd.
\end{equation}
For a fixed non-zero  $g \in \cS (\rd )$ (the Schwartz class),
the \stft, in short STFT, of $f \in
\cS ' (\rd ) $ (the space of tempered distributions), with respect to the window $g$, is given by
\begin{equation}
\label{eqi2}
V_gf(x,\o)=\la f,M_\o T_x g\ra =\int_{\Ren}
f(t)\, {\overline {g(t-x)}} \, e^{-2\pi i\o t}\,dt\, .
\end{equation}
By means of the STFT, the \tf\ localization operator $\aaf $ with
symbol $a$, analysis  window function $\f _1$, and synthesis
window function $\f _2$ can be formally defined  as
\begin{equation}
\label{eqi4}
\aaf f(t)=\int_{\Renn}a \phas V_{\f \f_1}f \phas M_\omega T_x \f
_2 (t) \, dx d\omega.
\end{equation}
 In particular, if $a \in \cS '(\rdd )$ and $\f _1, \f _2 \in \cS (\rd )$, then
\eqref{eqi4} is a well-defined continuous operator from $\cS
(\rd )$ to $\cS ' (\rd )$.
If $\f _1(t)  = \f _2 (t) = e^{-\pi t^2}$, then $A_a = \aaf $ is the
classical \aw\ operator and the mapping $a \mapsto \aaf  $ is understood
as a quantization rule, cf. \cite{Berezin71,Shubin91} and the recent contribution \cite{deGossonATFA19}.

In a weak  sense, the definition of $\aaf $
in \eqref{eqi4} can be rephrased as
\begin{equation}\label{anti-Wickg}
\la \gaw f,g\ra=\la aV_{\f_1}f, V_{\f_2}g\ra=\la
a,\overline{V_{\f_1}f}\,  V_{\f_2}g\ra,\quad f,g\in\sch(\Ren)\, .
\end{equation}

The definition in \eqref{eqi4} has suggested the study of localization operators as a multilinear
mapping
\begin{equation}\label{map}
(a, \f _1, \f _2) \mapsto \aaf .
\end{equation}
In \cite{CG02,CG05,Wignersharp2018,Nenad2015,Nenad2016,Wong02} the boundedness of the map in \eqref{map} has been
widely studied, in dependence on the function spaces of both
symbol $a$ and windows $\f_1,\f_2$. The sharpest Schatten-class results   are obtained by choosing 
\modsp s as spaces for both symbol and windows, as observed in \cite{CG05} and \cite{Wignersharp2018};  in those contributions the focus is limited to the Banach framework. Sharp compactness results  for localization operators  are contained in \cite{FG2006}. Finally, smoothness and decay of eigenfuctions for localization operators are studied in \cite{BCN19}, see also \cite{Abreu2012,Abreu2016,Abreu2017}.\par   Modulation  spaces are
(quasi-)Banach spaces that measure the concentration of functions and
distributions on the time-frequency plane. Since the STFT is the mean to
extract the time-frequency features of a function/distribution, the idea
that leads to the definition of \modsp s is the following:  \emph{
	give a (quasi)norm} to the STFT. These spaces will be introduced  in the following Subsection $2.2$.\par 
Another way to introduce localization operators is as a form of Weyl transform. The latter can be defined by means of another popular \tfr, the cross-Wigner distribution. Namely, given two functions $f_1,f_2\in
\cS(\Ren)$, the {\it cross-Wigner distribution} $W(f_1,f_1)$  is defined to be
\begin{equation}
\label{eq3232}
W(f_1,f_2)(x,\o)=\int f_1(x+\frac{t}2)\overline{f_2(x-\frac{t}2)} e^{-2\pi
	i\o t}\,dt.
\end{equation}
The quadratic expression $Wf = W(f,f)$ is called the Wigner
distribution of $f$. 

Every continuous operator from $\cS (\rd )$ to $\cS ' (\rd )$ can be
represented as a pseudodifferential operator in the  Weyl form $L_\sigma$  and the connection with the cross-Wigner distribution is provided  by
\begin{equation}\label{equiv1}
\la L_\sigma f,g\ra=\la \sigma,W(g,f)\ra,\quad\quad f,g\in\sch(\Ren).
\end{equation}
Localization operators $\gaw$ can be represented as  Weyl operators as follows (cf. \cite{BCG2004,CG02,Nenad2016})
\begin{equation}\label{WA}\gaw=L_{a\ast W(\f_2,\f_1)},
\end{equation}  so that the Weyl symbol of the localization operator $\aaf$
is given by
\begin{equation}\label{eq2}
\sigma = a\ast W(\f_2,\f_1)\, .
\end{equation}
This representation of localization operators in the Weyl form, together with boundedness properties of Weyl operators  and sharp  continuity properties for the cross-Wigner distribution, yields to  Schatten-class results for localization operators. In particular here we present new outcomes  in the quasi-Banach setting, while reviewing the known results in the Banach framework, see Theorems \ref{class} and \ref{main} below. \par
The paper is organized as follows.   Section $2$  presents the basic definitions and properties of the Schatten-von Neumann Classes $S_p(\lrd)$, $0<p\leq\infty$, of the modulation spaces and  the time-frequency analysis tools needed to infer  our results.  
Section $3$ exhibits the sufficient conditions for localization operators to be in the Schatten-von Neumann classes $S_p$. To chase this goal, sharp continuity  properties for the cross-Wigner distribution are presented. Such result is new  in the framework of quasi-Banach modulation spaces and is the main ingredient  to prove sufficient Schatten class conditions for localization operators. Section $4$ contains necessary Schatten class results for localization operators and ends by showing  perspectives and open problems about this topic.
\section{Preliminaries on Schatten Classes, Modulation Spaces and Frames}
\subsection{Schatten-von Neumann Classes.}
We limit to consider the Hilbert space $\lrd$. Let $T$ be a compact operator on $\lrd$. Then $T^\ast T$:  $\lrd\to   \lrd$  is compact, self-adjoint,
and non-negative. Hence, we can define the absolute value of $T$ by $|T| = (T^\ast T)^{\frac12}$, acting on $\lrd$. Recall that $|T|$ is compact, self-adjoint, and non-negative, hence by the Spectral Theorem we can find  an orthonormal basis $(\psi_n)_n$ for $\lrd$  consisting of
eigenvectors of $|T|$. The corresponding eigenvalues $s_1(T) \geq  s_2(T)\geq \dots \geq  s_n(T) \geq \dots\geq 0$, are called the  the singular values of $T$. 

If $0<p<\infty$ and the sequence of singular values is $\ell^p$-summable, then $T$ is said to belong to the Schatten-von Neumann class $S_p(\lrd)$. If $1 \leq p < \infty$, a norm is
associated to $S_p(\lrd)$ by 
\begin{equation}\label{normSp}
\|T\|_{S_p}:=\left(\sum_{n=1}^\infty s_n(T)^p\right)^\frac1p.
\end{equation}
If $1\leq p<\infty$ then $(S_p(\lrd), \|\cdot\|_{S_p})$ is a Banach space whereas, for $0<p<1$, $(S_p(\lrd), \|\cdot\|_{S_p})$ is a quasi-Banach space since  the quantity $\|T\|_{S_p}$ defined in \eqref{normSp} is only a quasinorm. 

For completeness, we define  $S_\infty(\lrd)$ to be the space of bounded operators on $\lrd$. The Schatten-von Neumann classes are nested, with
$S_p \subset S_q$, for details on this topic we refer to \cite{Gohberg1969,Reed-Simon1975,Simon2005,Schatten70,Shubin91,zhu2007operator}.

For $2\leq p<\infty$ and   $T$ in  $S_p(\lrd)$, we can express  its
norm by
\begin{equation}\label{char-p-big2}
\|T\|_{S_p}^p=\sup \sum_n \|T \phi_n\|_{L^2}^p,
\end{equation}
the supremum being over all orthonormal bases $(\phi_n)_n$ of
$\lrd$. Then, it is a straightforward consequence (see \cite[Theorem 12]{ZhuSchatten2015})
\begin{equation}\label{Schatten-norm-bound}
\left(\sum_{n} |\la T
\phi_n, \phi_n\ra|^p\right)^{1/p}\leq \|T\|_{S_p},
\end{equation} for every
orthonormal basis $(\phi_n)_n$, $2\leq p<\infty$. If $T\in S_2(\lrd)$ then $T$ is called \emph{Hilbert-Schmidt}  operator.  If $T\in  S_1(\lrd)$ then $T$ is said to be a \emph{trace class} operator and the space  $S_1$ is named the Trace Class. \par 
\begin{remark}\label{counterexp}  For $0<p<2$, the  characterization in \eqref{char-p-big2} does not hold, in general.  In fact, a simple example is shown by Bingyang, Khoi and Zhu in the paper \cite{ZhuSchatten2015}. Let us recall it for sake of clarity in the case of the Hilbert space $H=\lrd$. Fix an orthonormal basis $(\phi_n)_n$
and consider the function $h\in \lrd$ given by
$$h=\sum_{n=1}^\infty \frac{\phi_n}{\sqrt{n}\log( n+1)}. $$
Define the rank-one operator on $\lrd$ by 
$$ Tf= \la f,h\ra h, \quad f\in\lrd.$$
We have
$$T \phi_n=  \la \phi_n ,h\ra h= \frac{h}{\sqrt{n}\log( n+1)},\quad n\geq 1.$$
It follows that
$$\sum_{n=1}^\infty \|T\phi_n\|_{L^2}^p=\|h\|_{L^2}^p\sum_{n=1}^\infty \frac{1}{[\sqrt{n}\log(n+1)]^p}=\infty$$
for any $0<p<2$.
\end{remark} 


\subsection{Modulation Spaces}
\subsubsection{Weight functions}
In the sequel $v$ will always be a
continuous, positive,  submultiplicative  weight function on $\rd$, i.e., 
$ v(z_1+z_2)\leq v(z_1)v(z_2)$, for all $ z_1,z_2\in\Ren$.
We say that $m\in \mathcal{M}_v(\rd)$  if $m$ is a positive, continuous  weight function  on $\Ren$ {\it
	$v$-moderate}:
$ m(z_1+z_2)\leq Cv(z_1)m(z_2)$  for all $z_1,z_2\in\Ren$.
We will mainly work with polynomial weights of the type
\begin{equation}\label{vs}
v_s(z)=\la z\ra^s =(1+|z|^2)^{s/2},\quad s\in\bR,\quad z\in\rd.
\end{equation}
Observe that,  for $s<0$, $v_s$ is $v_{|s|}$-moderate.\par 
Given two weight functions $m_1,m_2$ on $\rd$, we write $$(m_1\otimes m_2)(x,\o)=m_1(x)m_2(\o),\quad x,\o\in \rd.$$
\noindent
{\bf Modulation Spaces.} We present the more general definition of such spaces, containing the quasi-Banach setting,  introduced first by Y.V. Galperin and S. Samarah in \cite{Galperin2004}.
\begin{definition}\label{def2.4}
	Fix a non-zero window $g\in\cS(\rd)$, a weight $m\in\mathcal{M}_v(\rdd)$ and $0<p,q\leq \infty$. The modulation space $M^{p,q}_m(\rd)$ consists of all tempered distributions $f\in\cS'(\rd)$ such that the (quasi)norm 
	\begin{equation}
	\|f\|_{M^{p,q}_m}=\|V_gf\|_{L^{p,q}_m}=\left(\intrd\left(\intrd |V_g f \phas|^p m\phas^p dx  \right)^{\frac qp}d\o\right)^\frac1q 
	\end{equation}
	(obvious changes with $p=\infty$ or $q=\infty)$ is finite. 
\end{definition}

The most known modulation spaces  are those  $M^{p,q}_m(\rd)$, with $1\leq p,q\leq \infty$, introduced by H. Feichtinger in \cite{feichtinger-modulation}. In that paper their main properties were exhibited; in particular we recall that  they are Banach spaces, whose norm does not depend on the window $g$:  different window functions in $\cS(\rd)$ yield equivalent norms. Moreover, the window class $\cS(\rd)$ can be extended  to the modulation space  $M^{1,1}_v(\rd)$ (so-called Feichtinger algebra). 

For shortness, we write $M^p_m(\rd)$ in place of $M^{p,p}_m(\rd)$ and $M^{p,q}(\rd)$ if $m\equiv 1$.

The modulation spaces $M^{p,q}_m(\rd)$, $0<p,q<1$, where  introduced almost twenty years later by Y.V. Galperin and S. Samarah in \cite{Galperin2004}.
In this framework, it appears that the largest natural class of windows universally admissible for all spaces $M^{p,q}_m(\rd)$, $0<p,q\leq \infty$ (with weight $m$ having at most polynomial growth) is the Schwartz class $\cS(\rd)$.
Many properties related to the quasi-Banach setting are still unexplored. 

The focus of this paper is on the quasi Banach setting, which allows to infer new results for localization operators.\par 
In the sequel we shall use inclusion relations for modulation spaces (cf. \cite[Theorem 3.4]{Galperin2004} and \cite[Theorem 12.2.2]{grochenig}):
\begin{theorem}\label{inclusionG}
	Let $m\in\mathcal{M}_v(\rdd)$. If $0<p_1\leq p_2\leq \infty$ and $0<q_1\leq q_2\leq \infty$ then $M^{p_1,q_1}_m(\rd)\subseteq M^{p_2,q_2}_m(\rd)$.
\end{theorem}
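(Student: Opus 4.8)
The plan is to transfer the inclusion from the continuous modulation quasinorms to a \emph{discrete} model, where it reduces to the elementary nesting of weighted mixed-norm sequence spaces. The first thing I would record is that the quasinorm in Definition~\ref{def2.4} is, up to equivalence, independent of the window $g\in\cS(\rd)$. The standard tool is the reproducing formula for the short-time Fourier transform, used in the pointwise form $|V_gf(z)|\le\|g\|_{L^2}^{-2}\,(|V_gf|\ast|V_gg|)(z)$ together with $V_gg\in\cS(\rdd)$. Consequently it suffices to prove $\|V_gf\|_{L^{p_2,q_2}_m}\lesssim\|V_gf\|_{L^{p_1,q_1}_m}$ for a single convenient window $g$, which I would take to be the Gaussian $g(t)=e^{-\pi t^2}$.

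Next I would invoke the Gabor-frame characterization of the modulation quasinorm, valid in the whole quasi-Banach range: for a sufficiently dense lattice $\La=\al\zd\times\be\zd$ and $g$ as above, $\|f\|_{M^{p,q}_m}\asymp\bigl\|\bigl(V_gf(\al k,\be l)\bigr)_{k,l\in\zd}\bigr\|_{\ell^{p,q}_m}$ for all $0<p,q\le\infty$ (this is contained in \cite{Galperin2004}; cf. \cite[Theorem 12.2.2]{grochenig} in the Banach case). Granting it, the statement collapses to the sequence-space inclusion $\ell^{p_1,q_1}_m(\La)\hookrightarrow\ell^{p_2,q_2}_m(\La)$ with quasinorm at most $1$. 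That step is routine: absorbing the (common) weight into $b_{k,l}:=c_{k,l}\,m(\al k,\be l)$, monotonicity of the $\ell^p$-quasinorms gives $\|(b_{k,l})_k\|_{\ell^{p_2}}\le\|(b_{k,l})_k\|_{\ell^{p_1}}$ for each fixed $l$ since $p_1\le p_2$, and a second application of monotonicity in the $l$-index, using $q_1\le q_2$, yields $\|c\|_{\ell^{p_2,q_2}_m}\le\|c\|_{\ell^{p_1,q_1}_m}$. Chaining the equivalences then gives $\|f\|_{M^{p_2,q_2}_m}\lesssim\|f\|_{M^{p_1,q_1}_m}$, with the obvious modifications (suprema replacing sums) when $p_i=\infty$ or $q_i=\infty$.

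The only nontrivial ingredient is the Gabor-frame equivalence used above: this is where the special structure of the short-time Fourier transform is exploited, and it cannot be dispensed with when $p_1<1$, since the ambient spaces $L^{p_1,q_1}_m(\rdd)$ and $L^{p_2,q_2}_m(\rdd)$ are themselves not nested. For the Gaussian window, $|V_gf|$ equals, up to a fixed Gaussian factor, the modulus of an entire function, so $|V_gf|^{p_1}$ satisfies a uniform submean-value inequality $|V_gf(z)|^{p_1}\lesssim\int_{z+Q}|V_gf(w)|^{p_1}\,dw$ over unit cubes $Q$; propagating this through the lattice, together with the $v$-moderateness of $m$, produces the claimed equivalence. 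If I wanted a fully self-contained argument I would prove this submean-value estimate from subharmonicity of $|F|^{p_1}$ for entire $F$ and any $p_1>0$ — that is the technical heart — after which the discrete argument of the previous paragraph applies verbatim.
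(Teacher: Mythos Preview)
The paper does not give its own proof of this theorem: it simply quotes the result, citing \cite[Theorem~3.4]{Galperin2004} for the full range $0<p,q\le\infty$ and \cite[Theorem~12.2.2]{grochenig} for the Banach case. So there is no in-paper argument to compare against, and your proposal should be read as a reconstruction of the cited result.

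Your argument is correct. Once the Gabor-frame norm equivalence (stated later in the paper as Theorem~\ref{framesmod}) is available, the inclusion is immediate from the elementary nesting $\ell^{p_1,q_1}_m\hookrightarrow\ell^{p_2,q_2}_m$, and you correctly isolate the submean-value inequality for $|V_\varphi f|^{p}$ with Gaussian window --- coming from subharmonicity of $|F|^{p}$ for entire $F$ --- as the analytic input that drives the discretization. That sketch is sound.

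One point of logical hygiene is worth flagging. In \cite{Galperin2004} the order is reversed: the inclusion (their Theorem~3.4) is proved \emph{before} the Gabor-frame characterization (their Theorem~3.7), directly from a local averaging bound of the shape $|V_gf(z)|^{r}\lesssim(|V_gf|^{r}\ast|V_gg|^{r})(z)$ for $0<r\le1$, followed by a continuous Young-type estimate on $L^{p,q}_m$. Their frame theorem then builds on this. Your route works too, but it inverts the dependency: if you cite Theorem~\ref{framesmod} as a black box you must be sure its proof does not already consume the inclusion you are establishing. Your final paragraph shows you are aware of this, and the self-contained submean-value argument you outline is exactly what is needed to close that loop.
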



\begin{remark}
	In our framework it is important to notice the following inclusion relation for $s>0$:
\begin{equation}\label{compact}
	 M^{\infty}_{v_s\otimes 1} (\rdd) \subset M^{p,\infty}(\rdd)\quad  \mbox{if}\, \,p>2d/s.
\end{equation}
	This follows from the recent contribution \cite[Theorem 1.5]{Guo2019}. 
\end{remark}

Let us recall convolution relations for modulations spaces. They are contained  in  the contributions \cite{CG02} and \cite{toft2004} for the Banach framework. The more general case is exhibited in \cite{BCN19}.
\begin{proposition}\label{mconvmp}
	Let $\nu (\omega )>0$ be  an arbitrary  weight function on $\Ren$,  $0<
	p,q,r,t,u,\gamma\leq\infty$, with
	\begin{equation}\label{Holderindices}
	\frac 1u+\frac 1t=\frac 1\gamma,
	\end{equation}
	and 
	\begin{equation}\label{Youngindicesrbig1}
	\frac1p+\frac1q=1+\frac1r,\quad \,\, \text{ for } \, 1\leq r\leq \infty
	\end{equation}
	whereas
	\begin{equation}\label{Youngindicesrbig1}
	p=q=r,\quad \,\, \text{ for } \, 0<r<1.
	\end{equation}
	For $m\in\mathcal{M}_v(\rdd)$,   $m_1(x)
	= m(x,0) $ and $m_2(\omega ) = m(0,\omega )$ are the restrictions
	to $\Ren\times\{0\}$ and  $\{0\}\times\Ren$, and likewise for $v$. Then 
	\begin{equation}\label{mconvm}
	M^{p,u}_{m_1\otimes \nu}(\Ren)\ast  M^{q,t}_{v_1\otimes
		v_2\nu^{-1}}(\Ren)\subseteq M^{r,\gamma}_m(\Ren)
	\end{equation}
	with  norm inequality  $$\| f\ast h \|_{M^{r,\gamma}_m}\lesssim
	\|f\|_{M^{p,u}_{m_1\otimes \nu}}\|h\|_{ M^{q,t}_{v_1\otimes
			v_2\nu^{-1}}}.$$
\end{proposition}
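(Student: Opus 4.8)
The plan is to reduce the statement to a pointwise estimate on short-time Fourier transforms, followed by two applications of (quasi-)Young's inequality — one in the "time" variable $x$ and one in the "frequency" variable $\omega$ — with the Hölder split $1/u+1/t=1/\gamma$ handling the mixed second exponents. First I would fix a window of the form $g=g_0\ast g_1$ (or, more cleanly, use the identity $V_{g_0\ast g_1}(f\ast h)(x,\omega) = e^{-2\pi i x\omega}\,\big(V_{g_0}f(\cdot,\omega)\ast_x V_{g_1}h(\cdot,\omega)\big)(x)$, which is the standard STFT-of-a-convolution formula); this is legitimate since in the quasi-Banach range the Schwartz class is a universally admissible window class and different admissible windows give equivalent quasinorms. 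The modulus of this identity kills the unimodular factor, so
\[
\big|V_{g}(f\ast h)(x,\omega)\big| \le \big(|V_{g_0}f(\cdot,\omega)|\ast |V_{g_1}h(\cdot,\omega)|\big)(x).
\]

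Next I would insert the weight. Using $v$-moderateness of $m$ and the factorization into restrictions $m_1,m_2$ one gets, for the $\omega$-slice, the pointwise bound
\[
m(x,\omega) \lesssim m_1(x)\,v_2(\omega)\qquad\text{and splitting }m_1(x)\le C\,m_1(x-y)\,v_1(y),
\]
so that after absorbing the auxiliary weight $\nu(\omega)$ and its inverse one reaches a clean convolution in $x$ of two weighted $L^p_{m_1}$- and $L^q_{v_1}$-slices. Then I would apply Young's inequality in $x$ in the Banach regime $1\le r\le\infty$ with $1/p+1/q=1+1/r$, and its quasi-Banach analogue (the Galperin–Samarah / Toft version of Young for $0<r<1$, which forces $p=q=r$) in the remaining regime. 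This produces a bound for the inner $L^r_{m(\cdot,\omega)}$-norm by a product $\|V_{g_0}f(\cdot,\omega)\|_{L^p_{m_1}}\cdot \|V_{g_1}h(\cdot,\omega)\|_{L^q_{v_1}}$ times the scalar weight factor $v_2(\omega)\nu(\omega)^{-1}\nu(\omega)=v_2(\omega)$ — more precisely one keeps $\nu(\omega)$ with the $f$-factor and $\nu(\omega)^{-1}v_2(\omega)$ with the $h$-factor.

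Finally I would integrate in $\omega$: the right-hand side is now a product of two functions of $\omega$, one lying in $L^u_{\nu}$ (by definition of $\|f\|_{M^{p,u}_{m_1\otimes\nu}}$) and the other in $L^t_{v_2\nu^{-1}}$, so Hölder's inequality in $\omega$ with $1/u+1/t=1/\gamma$ yields exactly $\|f\ast h\|_{M^{r,\gamma}_m}\lesssim \|f\|_{M^{p,u}_{m_1\otimes\nu}}\|h\|_{M^{q,t}_{v_1\otimes v_2\nu^{-1}}}$. The main obstacle I anticipate is the quasi-Banach bookkeeping: for $0<r<1$ and $0<u,t,q<1$ one cannot use the ordinary Minkowski/Young inequalities, so each step must invoke the correct $p$-triangle inequality and the restricted Young inequality valid only when all three exponents coincide; one must also check carefully that the splitting of the weight $m$ via $v$-moderateness is compatible with the constraint $p=q=r$ and does not secretly require a Banach-type interpolation. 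A secondary technical point is justifying the STFT-convolution identity and the interchange of integrals at the level of tempered distributions rather than nice functions, which is handled by density of $\cS(\rd)$ (or of finite linear combinations of time-frequency shifts) together with the fact that both sides define continuous sesquilinear forms.
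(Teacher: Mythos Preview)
The paper does not prove this proposition; it states it as a known result and cites \cite{CG02,toft2004} for the Banach range and \cite{BCN19} for the quasi-Banach extension, so there is no in-paper argument to compare against.

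Your outline matches the standard proof in those references for $r\ge1$: the STFT-of-a-convolution identity $|V_{g_0*g_1}(f*h)(x,\omega)|\le\big(|V_{g_0}f(\cdot,\omega)|*|V_{g_1}h(\cdot,\omega)|\big)(x)$, followed by Young in $x$ and H\"older in $\omega$, is exactly the Cordero--Gr\"ochenig argument. The weight bookkeeping via $v$-moderateness is also the standard route.

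The quasi-Banach step, however, needs more than you indicate. There is no continuous Young inequality $L^r(\rd)*L^r(\rd)\subset L^r(\rd)$ for $0<r<1$; the $p$-triangle trick that yields $\ell^r*\ell^r\subset\ell^r$ for sequences fails for integrals (take e.g.\ $f=g=|x|^{-1}\chi_{(0,1)}\in L^{1/2}(\bR)$, whose convolution is not even finite near the origin). What makes the modulation-space statement survive is the Wiener-amalgam structure of the STFT: for $g\in\cS(\rd)$ one has $V_g f\in W(L^\infty,L^{p,u}_m)$, and convolution of locally bounded functions with $\ell^r$-type global decay \emph{does} obey the required estimate---equivalently, one discretizes via a Gabor frame and uses the legitimate sequence inequality $\ell^r*\ell^r\subset\ell^r$. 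Your phrase ``Galperin--Samarah / Toft version of Young'' points toward the right literature, but the actual mechanism is the amalgam/discrete structure, not a bare $L^r$ convolution estimate. If you write the $x$-step literally as $L^r_{m_1}*L^r_{v_1}\hookrightarrow L^r_{m_1}$ for $r<1$, that step fails; you must either pass to the discrete picture first or invoke the amalgam convolution relation $W(L^\infty,L^r)*W(L^\infty,L^r)\hookrightarrow W(L^\infty,L^r)$ explicitly.
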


\subsection{Frame Theory.} A sequence of functions $\{b_j\,:\,j\in\mathcal{J}\} $ in $\lrd$ is a \emph{frame} for the Hilbert space $\lrd$ if there exist
	positive constants $0<A\leq B<\infty$, such that
	\begin{equation}\label{frame}
	A\|f\|_{L^2}^2\leq\sum_{j\in\mathcal{J}}|\la f, b_j\ra|^2\leq B
	\|f\|_{L^2}^2,\quad \forall f\in\lrd.
	\end{equation}
	The constants $A$ and $B$ are called \emph{lower} and \emph{upper}
	frame bounds, respectively. It is straightforward from
	\eqref{frame} (or see, e.g., \cite[Pag. 398]{HeWeiss96})  to check the
	elements of a frame satisfy
	\begin{equation}\label{unif}
	\|b_j\|_{L^2}\leq \sqrt{B},\quad \forall j \in\mathcal{J}.
	\end{equation}
Using \eqref{unif},   in \cite{CG05}  we extended the inequality in \eqref{Schatten-norm-bound}  from orthonormal bases to frames.
\begin{lemma}\label{ppp}
	Let $(b_n)_{n}$ be a frame for $\lrd$, as defined
	in \eqref{frame}, with upper bound $B$. If $T\in S_p(\lrd)$, for $1\leq p\leq\infty$, then
	\begin{equation}\label{adj}
	\left(\sum_{n=1}^\infty |\la T b_n,b_n\ra|^p\right)^{1/p}\leq
	B\|T\|_{S_p}.
	\end{equation}
\end{lemma}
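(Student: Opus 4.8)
The plan is to reduce the frame estimate to the orthonormal-basis estimate \eqref{Schatten-norm-bound} by a dilation/normalization trick, exploiting the uniform bound \eqref{unif} on the frame elements. First I would recall that, by the standard frame theory, associated to the frame $(b_n)_n$ with bounds $A,B$ there is the frame operator $Sf=\sum_n\la f,b_n\ra b_n$, which is bounded, positive, invertible on $\lrd$, with $A\,\Id\leq S\leq B\,\Id$. The key observation is that $(b_n)_n$ can be related to a Parseval frame, and Parseval frames are precisely the images of orthonormal bases under isometric embeddings (Naimark's dilation), which is the bridge to \eqref{Schatten-norm-bound}.

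Concretely, I would argue as follows. The normalized tight frame $\tilde b_n:=S^{-1/2}b_n$ is a Parseval frame, so by Naimark's theorem there is a Hilbert space $\cK\supseteq\lrd$ and an orthonormal basis $(e_n)_n$ of $\cK$ with $P\,e_n=\tilde b_n$, where $P:\cK\to\lrd$ is the orthogonal projection. Given $T\in S_p(\lrd)$, extend it to $\tilde T:=\iota\,T\,S^{-1/2}P$ acting on $\cK$, where $\iota:\lrd\hookrightarrow\cK$; then $\tilde T\in S_p(\cK)$ because $S^{-1/2}$, $P$ and $\iota$ are bounded and $S_p$ is an operator ideal, and moreover $\|\tilde T\|_{S_p}\leq\|S^{-1/2}\|_{\mathrm{op}}\|T\|_{S_p}$. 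Applying \eqref{Schatten-norm-bound} in $\cK$ to the orthonormal basis $(e_n)_n$ gives
\begin{equation}\label{adj-step}
\Big(\sum_n|\la \tilde T e_n,e_n\ra_{\cK}|^p\Big)^{1/p}\leq\|\tilde T\|_{S_p}.
\end{equation}
Now one computes $\la\tilde T e_n,e_n\ra_{\cK}=\la T S^{-1/2}P e_n,P e_n\ra_{L^2}=\la T S^{-1/2}\tilde b_n,\tilde b_n\ra_{L^2}$. This is not quite $\la Tb_n,b_n\ra$ yet, and this is where the main obstacle lies: one must absorb the $S^{-1/2}$ factors.

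The way around this is to choose the embedding more carefully: instead of $S^{-1/2}$, use the fact that for the Parseval frame $(\tilde b_n)$ one has $\la T\tilde b_n,\tilde b_n\ra$ with $\|\tilde b_n\|_{L^2}\leq 1$, and then relate back to $b_n=S^{1/2}\tilde b_n$. Write $\la Tb_n,b_n\ra=\la T S^{1/2}\tilde b_n,S^{1/2}\tilde b_n\ra=\la S^{1/2}TS^{1/2}\tilde b_n,\tilde b_n\ra$, so applying \eqref{adj-step} with $T$ replaced by $R:=S^{1/2}TS^{1/2}$ yields $\big(\sum_n|\la Tb_n,b_n\ra|^p\big)^{1/p}\leq\|R\|_{S_p}\leq\|S^{1/2}\|_{\mathrm{op}}^2\|T\|_{S_p}\leq B\,\|T\|_{S_p}$, since $\|S^{1/2}\|_{\mathrm{op}}^2=\|S\|_{\mathrm{op}}\leq B$. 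The case $p=\infty$ is immediate from \eqref{unif}: $|\la Tb_n,b_n\ra|\leq\|T\|_{S_\infty}\|b_n\|_{L^2}^2\leq B\|T\|_{S_\infty}$, consistent with the interpolated bound.

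The technical point to be careful about — the genuine obstacle — is the Naimark dilation step when $\cN=\mathcal J$ is infinite and when one wants \eqref{Schatten-norm-bound} to apply verbatim: \eqref{Schatten-norm-bound} was stated for orthonormal bases of $\lrd$, and here $(e_n)$ is an orthonormal \emph{basis} of the possibly larger $\cK$, so one needs that \eqref{char-p-big2}--\eqref{Schatten-norm-bound} hold on any separable Hilbert space, which is standard (the characterization is abstract). Alternatively, and more cleanly for a self-contained note, one can avoid dilation altogether: complete $(\tilde b_n)$ is unnecessary — simply apply the already-available inequality \eqref{Schatten-norm-bound} is the wrong tool; instead use directly that $R=S^{1/2}TS^{1/2}\in S_p$ and that $(\tilde b_n)$ being Parseval means the analysis operator $C:\lrd\to\ell^2$, $Cf=(\la f,\tilde b_n\ra)_n$, is an isometry, so $C R C^\ast$ (or rather the diagonal pull-back) inherits the Schatten norm; the diagonal entries of $CRC^\ast$ are exactly $\la R\tilde b_n,\tilde b_n\ra=\la Tb_n,b_n\ra$ and the $\ell^p$ norm of the diagonal of a Schatten-$p$ operator is bounded by its Schatten-$p$ norm (this is the abstract content behind \eqref{Schatten-norm-bound}). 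Either route gives the claimed bound $B\|T\|_{S_p}$ for $1\leq p\leq\infty$.
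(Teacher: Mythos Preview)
Your argument is correct. The clean route you eventually settle on---write $b_n=S^{1/2}\tilde b_n$ with $(\tilde b_n)=(S^{-1/2}b_n)$ the canonical Parseval frame, so that $\la Tb_n,b_n\ra=\la R\tilde b_n,\tilde b_n\ra$ for $R=S^{1/2}TS^{1/2}$, and then conjugate by the isometric analysis operator $C:\lrd\to\ell^2(\cN)$ (or, equivalently, use Naimark dilation) to realize these numbers as the diagonal of $CRC^\ast\in S_p(\ell^2)$---works for every $1\le p\le\infty$ once one knows that the $\ell^p$-norm of the diagonal of an $S_p$-operator with respect to an orthonormal basis is bounded by its $S_p$-norm. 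You are right to flag that \eqref{Schatten-norm-bound} as recorded in the paper covers only $2\le p<\infty$; the full-range diagonal inequality is a standard, independent fact (SVD plus Cauchy--Schwarz at $p=1$, trivial at $p=\infty$, complex interpolation of $S_p$ and $\ell^p$ in between), so there is no circularity and no gap.

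This is, however, a different route from the one the paper points to. The paper cites \cite{CG05} and singles out \eqref{unif} as the key ingredient; that argument works directly with the singular value decomposition $T=\sum_j s_j\,\la\cdot,g_j\ra h_j$: the endpoint $p=\infty$ comes from $|\la Tb_n,b_n\ra|\le\|T\|\,\|b_n\|_{L^2}^2\le B\|T\|$ via \eqref{unif}, the endpoint $p=1$ from Cauchy--Schwarz together with the upper frame inequality $\sum_n|\la f,b_n\ra|^2\le B\|f\|_{L^2}^2$, and the intermediate range by interpolation of the linear map $T\mapsto(\la Tb_n,b_n\ra)_n$ between $S_1\to\ell^1$ and $S_\infty\to\ell^\infty$. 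Your approach is more structural: the constant $B$ enters through the operator bound $\|S^{1/2}\|^2=\|S\|\le B$ rather than through the elementwise bound \eqref{unif}, and the reduction to a Parseval frame makes the mechanism transparent without ever writing down the SVD of $T$. The trade-off is that you invoke a bit of abstract frame machinery (Naimark, or the isometry of the Parseval analysis operator), whereas the paper's route is elementary but needs the interpolation step.
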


Observe that an orthonormal basis is a special instance of frame with upper bound $B=1$; hence Lemma \ref{ppp} provides an alternative proof to the inequality in  \eqref{Schatten-norm-bound}, for every $1\leq p\leq\infty$.

In  the case $0<p<1$, Lemma \ref{ppp} is false in general. This is a straightforward consequence of the following result \cite[Proposition 22]{ZhuSchatten2015}:
\begin{proposition} Suppose $0 < p < 1$ and $(\phi_n)_n$ any orthonormal basis for $\lrd$.
Then there exists a positive operator $S \in S_p(\lrd)$ such that $(\la S\phi_n,\phi_n\ra )_n\notin \ell^p$.
\end{proposition}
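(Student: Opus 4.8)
The plan is to construct $S$ explicitly as a direct sum of rescaled rank-one projections, arranged so that its eigenvalue sequence is $\ell^p$-summable while its diagonal against $(\phi_n)_n$ carries an extra combinatorial factor that destroys $\ell^p$-summability. The mechanism is the mismatch between the eigenbasis of $S$ and the prescribed basis $(\phi_n)_n$, amplified block by block, with the strict inequality $p<1$ doing the work.

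First I would partition $\bN$ into consecutive finite blocks $B_k$ of cardinality $N_k$ (to be fixed later), set $V_k=\mathrm{span}\{\phi_n:n\in B_k\}$, and let $u_k=N_k^{-1/2}\sum_{n\in B_k}\phi_n$, a unit vector in $V_k$; write $P_k$ for the orthogonal rank-one projection onto $u_k$. Fixing a summable positive sequence, say $\lambda_k=2^{-k}$, I would put $S=\sum_k \lambda_k P_k$. Since the ranges of the $P_k$ lie in the pairwise orthogonal subspaces $V_k$, the operator $S$ is a well-defined bounded, self-adjoint, positive operator; its nonzero eigenvalues are exactly the $\lambda_k$, each of multiplicity one (the remaining directions of each $V_k$ belonging to the kernel). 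Hence $\|S\|_{S_p}^p=\sum_k \lambda_k^p<\infty$, so $S\in S_p(\lrd)$, regardless of how the $N_k$ are chosen.

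Next I would compute the diagonal. For $n\in B_k$ one has $S\phi_n=\lambda_k P_k\phi_n=\lambda_k N_k^{-1/2}u_k$, hence $\langle S\phi_n,\phi_n\rangle=\lambda_k/N_k$, the same value for every $n$ in the block. Therefore $\sum_{n\in B_k}|\langle S\phi_n,\phi_n\rangle|^p=N_k(\lambda_k/N_k)^p=\lambda_k^p N_k^{1-p}$, and summing over $k$ gives $\sum_n|\langle S\phi_n,\phi_n\rangle|^p=\sum_k \lambda_k^p N_k^{1-p}$.

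Finally, the core of the argument: since $0<p<1$ the exponent $1-p$ is positive, so the factor $N_k^{1-p}$ is an unbounded gain at our disposal. Choosing $N_k=\lceil \lambda_k^{-p/(1-p)}\rceil=\lceil 2^{kp/(1-p)}\rceil$ forces $\lambda_k^p N_k^{1-p}\geq 1$ for every $k$, so the diagonal series diverges, which is exactly the claim. There is essentially no obstacle here; the only point worth noting is that enlarging the blocks does not affect membership in $S_p$, because the spectrum of $S$ depends only on $(\lambda_k)_k$. The construction is elementary once one realizes that spreading each eigenvector $u_k$ uniformly over a block of $N_k$ of the $\phi_n$ dilutes the corresponding diagonal entries by a factor $1/N_k$ — harmless for the $\ell^p$ norm of the spectrum but fatal for the $\ell^p$ norm of the diagonal when $p<1$.
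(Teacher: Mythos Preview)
Your construction is correct. The paper itself does not prove this proposition---it is quoted from \cite{ZhuSchatten2015}---but earlier, in the remark preceding it, the paper reproduces from that same source a rank-one example that already does the job with less machinery than your block decomposition. Concretely, for $h=\sum_n c_n\phi_n$ with $(c_n)\in\ell^2$, the positive rank-one operator $Sf=\langle f,h\rangle h$ lies in $S_p$ for every $p>0$ (a single nonzero singular value $\|h\|_{L^2}^2$), while $\langle S\phi_n,\phi_n\rangle=|c_n|^2$; taking $c_n=(\sqrt{n}\log(n+1))^{-1}$ as in the paper's remark gives $\sum_n|\langle S\phi_n,\phi_n\rangle|^p=\sum_n n^{-p}\log^{-2p}(n+1)=\infty$ for $0<p<1$. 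Your argument instead spreads each eigenvector uniformly over a block of size $N_k$ and exploits the gain $N_k^{1-p}$; both routes rest on the same phenomenon---for $p<1$ one may dilute $\ell^p$ mass over many coordinates at no $\ell^2$ cost---but the rank-one version is shorter and shows that even finite-rank operators already witness the failure.
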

Since an orthonormal basis is a frame with frame bounds $A=B=1$, it follows that the majorization \eqref{adj} fails for  $(\phi_n)_n$ and, consequently, Lemma \ref{ppp} is false.
For $p\geq 1$, a useful consequence of Lemma \ref{ppp} is as follows (cf.  \cite[Corollary 2]{CG05}):
\begin{corollary}\label{adj4}
	Let $(b_n)_n$ be a frame with upper bound $B$. Let $L\in S_\infty(\lrd)$
	and $T\in  S_p(\lrd)$, with $1\leq p\leq\infty$. Then we have
	\begin{equation}\label{adj6}
	\left(\sum_{n=1} ^\infty|\la T b_n,L
	b_n\ra|^p\right)^{1/p}\leq  B\|T\|_{S_p}\|L\|_{S_\infty}.
	\end{equation}
\end{corollary}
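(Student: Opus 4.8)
The plan is to deduce Corollary~\ref{adj4} from Lemma~\ref{ppp} by the standard trick of turning the quantity $\langle Tb_n, Lb_n\rangle$ into a single inner product against a new frame. First I would rewrite $\langle Tb_n, Lb_n\rangle = \langle L^\ast T b_n, b_n\rangle$, so that the composed operator $S:=L^\ast T$ appears on the left. The point is that $S_p(\lrd)$ is a two-sided ideal in $S_\infty(\lrd)$: since $T\in S_p$ and $L\in S_\infty$, the adjoint $L^\ast$ is also bounded, hence $S=L^\ast T\in S_p(\lrd)$ with the ideal estimate $\|S\|_{S_p}\le \|L^\ast\|_{S_\infty}\|T\|_{S_p}=\|L\|_{S_\infty}\|T\|_{S_p}$ (using $\|L^\ast\|_{S_\infty}=\|L\|_{S_\infty}$).

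Next I would apply Lemma~\ref{ppp} directly to $S$ and the given frame $(b_n)_n$ with upper bound $B$: for $1\le p\le\infty$,
\begin{equation*}
\left(\sum_{n=1}^\infty |\langle S b_n, b_n\rangle|^p\right)^{1/p}\le B\,\|S\|_{S_p}.
\end{equation*}
Combining this with the ideal bound on $\|S\|_{S_p}$ and the identity $\langle Sb_n,b_n\rangle=\langle Tb_n,Lb_n\rangle$ yields exactly \eqref{adj6}. The case $p=\infty$ is read with the usual convention that the $\ell^\infty$-sum is a supremum, and it follows the same way (indeed it reduces to the elementary bound $|\langle Tb_n,Lb_n\rangle|\le \|T\|_{S_\infty}\|L\|_{S_\infty}\|b_n\|_{L^2}^2\le B\|T\|_{S_\infty}\|L\|_{S_\infty}$, using \eqref{unif}).

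I do not expect a serious obstacle here; the only point requiring a little care is making sure the ideal property of $S_p$ is invoked in the correct (quasi-)Banach range. Since the statement restricts to $1\le p\le\infty$, $S_p$ is a genuine Banach ideal and the estimate $\|L^\ast T\|_{S_p}\le\|L^\ast\|_{S_\infty}\|T\|_{S_p}$ is classical (see the references on Schatten classes cited in Section~2.1); no quasi-norm subtleties enter. One should also note that $L^\ast T$ need not be self-adjoint or positive, but Lemma~\ref{ppp} as stated applies to an arbitrary $T\in S_p(\lrd)$, so this causes no problem. Thus the proof is essentially a one-line reduction once the composition $L^\ast T$ is identified as the operator to which Lemma~\ref{ppp} should be applied.
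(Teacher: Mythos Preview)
Your proof is correct and follows exactly the approach implicit in the paper: the corollary is stated there as ``a useful consequence of Lemma~\ref{ppp}'' with a reference to \cite[Corollary~2]{CG05}, and the intended derivation is precisely the one you give---rewrite $\langle Tb_n,Lb_n\rangle=\langle L^\ast T b_n,b_n\rangle$, use the two-sided ideal property of $S_p$ to bound $\|L^\ast T\|_{S_p}\le\|L\|_{S_\infty}\|T\|_{S_p}$, and then apply Lemma~\ref{ppp} to the operator $L^\ast T$.
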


 In  \cite[Proposition 10]{Maurice2015}, see also \cite{seip92,seip-wallsten}, it is
proved that, if $\a\b<1$ and
\begin{equation}\label{fi} \f:=2^{d/4}e^{-\pi
	x^2},\end{equation}
 then the set
of the Gaussian \tfs \,$(M_{\b n}T_{\a k}\f)_{n,k\in \bZ^d}$ is a
frame for $L^2(\rdd)$ (called Gabor frame). 
In the sequel we shall also use the  Gabor frames on  $\lrdd$ given by
$$ (M_{\b n}T_{\a k}\Phi)_{k,n
	\in\zdd},$$ where $\Phi$ is the $2d$-dimensional
Gaussian function below
\begin{equation}\label{Phi}
\Phi\phas:=2^{-d}e^{-\pi (x^2+\o^2)},\quad \phas\in\rdd.
\end{equation}
It is easy to compute (or see, e.g.,
\cite[Lemma 1.5.2]{grochenig}) that
\begin{equation}\label{Gauss0}V_\f \f\phas=2^{-d/2}
e^{-\pi i x \o}e^{-\frac\pi 2(x^2+\o^2)}.\end{equation}
\begin{definition}\label{ellp}
	For $0<p,q\leq \infty$, $m\in \mathcal{M}_v(\zdd)$, the space $\ell^{p,q}_m(\zdd)$ consists of all sequences $c=(c_{k,n})_{k,n\in\zd}$ for which the (quasi-)norm 
	$$\|c\|_{\ell^{p,q}_m}=\left(\sum_{n\in\zd}\left(\sum_{k\in\zd}|c_{k,n}|^p m(k,n)^p\right)^{\frac qp}\right)^{\frac 1q}
	$$
	(with obvious modification for $p=\infty$ or $q=\infty$) is finite.
\end{definition}

For $p=q$, $\ell^{p,q}_m(\zdd)=\ell^p_m(\zdd)$, the standard spaces of sequences.  Namely, in dimension $d$, 
for $0<p\leq \infty$, $m$ a weight function on $\zd$, a sequence $c=(c_k)_{k\in\zd}$ is in $\ell^p_m(\zd)$ if
$$\|c\|_{\ell^{p}_m}=\left(\sum_{k\in\zd}|c_{k}|^p m(k)^p\right)^{\frac 1p}<\infty.
$$

Discrete equivalent modulation spaces norms are produced by means of Gabor frames.  The key result is the following characterization for the $M^{p,q}_m$- norm of localization symbols (see \cite[Chapter 12]{grochenig} for $1\leq p,q\leq\infty$, and \cite[Theorem 3.7]{Galperin2004} for $0<p,q<1$). 
\begin{theorem}\label{framesmod}
	Assume $m\in\mathcal{M}_v(\rdd)$,  $0<p,q\leq\infty$. Consider the Gabor frame  $(M_{\b n}T_{\a k}\Phi)_{k,n
		\in\zdd}$ with Gaussian  window $\Phi$ in \eqref{Phi}. Then, for every $a \in\Mmpq(\rdd)$,
	\begin{equation}\label{idea}
	\|a\|_{M^{p,q}_m(\rdd)}\asymp \|(\la a ,M_{\b n}T_{\a
		k}\Phi\ra_{n,k\in\zdd})_{n,k\in\zdd}\|_{\ell_m^{p,q}(\bZ^{4d})}.
	\end{equation}
\end{theorem}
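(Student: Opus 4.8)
\emph{Proof plan.}
The plan is to read the equivalence off from Gabor frame theory, handling all indices $0<p,q\le\infty$ at once. Since $\a\b<1$, the cited results of \cite{Maurice2015,seip92,seip-wallsten} guarantee that the Gaussian system $\cG=(M_{\b n}T_{\a k}\Phi)_{k,n\in\zdd}$ is a frame for $\lrdd$; hence its frame operator $S$ is bounded and invertible on $\lrdd$ and the canonical dual window $\ga:=S^{-1}\Phi$ is well defined. The first point is that $\ga$ is again an admissible window in the strongest sense: since $\Phi\in\cS(\rdd)\subset M^1_v(\rdd)$ for every polynomial weight $v$, the Gr\"ochenig--Leinert/Janssen theory of Gabor frames with $M^1_v$ windows gives $\ga\in M^1_v(\rdd)$, together with the reconstruction identity
\begin{equation*}
a=\sum_{k,n\in\zdd}\la a,M_{\b n}T_{\a k}\Phi\ra\,M_{\b n}T_{\a k}\ga ,
\end{equation*}
which holds in $\lrdd$ and, as will follow from the estimates below, unconditionally in $M^{p,q}_m(\rdd)$ as well.

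The core of the argument is a pair of boundedness statements for the analysis and synthesis operators attached to an $M^1_v$ window $g$: the analysis map $C_g\colon a\mapsto(\la a,M_{\b n}T_{\a k}g\ra)_{k,n}$ sends $M^{p,q}_m(\rdd)$ into $\ell^{p,q}_m(\bZ^{4d})$, the synthesis map $D_g\colon c\mapsto\sum_{k,n}c_{k,n}M_{\b n}T_{\a k}g$ sends $\ell^{p,q}_m(\bZ^{4d})$ into $M^{p,q}_m(\rdd)$, and
\begin{equation*}
\|C_g a\|_{\ell^{p,q}_m}\lesssim\|g\|_{M^1_v}\,\|a\|_{M^{p,q}_m},\qquad \|D_g c\|_{M^{p,q}_m}\lesssim\|g\|_{M^1_v}\,\|c\|_{\ell^{p,q}_m}.
\end{equation*}
For $1\le p,q\le\infty$ these are in \cite[Chapter 12]{grochenig}; for $0<p,q<1$ they are \cite[Theorem 3.7]{Galperin2004}; the remaining mixed ranges $0<p<1\le q\le\infty$ and $0<q<1\le p\le\infty$ follow from the same scheme, applying the $p$-triangle inequality in the inner (time) variable when $p<1$ and the $q$-triangle inequality in the outer (frequency) variable when $q<1$, and using that for $g\in M^1_v$ the local-maximal kernel $(k,n)\mapsto\sup_{z\in Q}|V_\Phi g(z-(\a k,\b n))|$, with $Q$ a fundamental domain of the lattice, lies in $\ell^{\min(1,p),\min(1,q)}_v(\bZ^{4d})$, so that the relevant discrete mixed-norm convolution inequalities apply. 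Combining the reconstruction identity $a=D_\ga C_\Phi a$ with the two bounds gives
\begin{equation*}
\|a\|_{M^{p,q}_m}=\|D_\ga C_\Phi a\|_{M^{p,q}_m}\lesssim\|\ga\|_{M^1_v}\,\|C_\Phi a\|_{\ell^{p,q}_m}\lesssim\|\ga\|_{M^1_v}\,\|\Phi\|_{M^1_v}\,\|a\|_{M^{p,q}_m},
\end{equation*}
and since the entries of $C_\Phi a$ are precisely the numbers $\la a,M_{\b n}T_{\a k}\Phi\ra$, this is the asserted equivalence $\|a\|_{M^{p,q}_m(\rdd)}\asymp\|(\la a,M_{\b n}T_{\a k}\Phi\ra_{n,k})\|_{\ell^{p,q}_m(\bZ^{4d})}$.

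I expect the real work to lie in the quasi-Banach bookkeeping in the two operator bounds when $\min(p,q)<1$: there $M^{p,q}_m$ and $\ell^{p,q}_m$ are only quasi-normed, convergence of the Gabor series must be established in the non-locally-convex $M^{p,q}_m$-topology, and the customary duality and interpolation shortcuts are unavailable, so one must estimate the discrete convolutions $c\ast K$ directly through an inequality of the type $\|c\ast K\|_{\ell^{p,q}_m}\lesssim\|c\|_{\ell^{p,q}_m}\,\|K\|_{\ell^{\min(1,p),\min(1,q)}_v}$ and exploit the rapid decay of $V_\Phi g$ for Schwartz $g$ to keep the kernel norm finite. Everything else — the frame property of the Gaussian system, the membership $\ga\in M^1_v$, and the window-independence of the $M^{p,q}_m$-norm — is classical and can be quoted as above.
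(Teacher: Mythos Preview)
The paper does not give its own proof of this theorem: it is stated as a known result, with pointers to \cite[Chapter~12]{grochenig} for $1\le p,q\le\infty$ and to \cite[Theorem~3.7]{Galperin2004} for $0<p,q<1$. Your sketch is a faithful outline of the standard argument underlying those references --- frame property of the Gaussian Gabor system, regularity of the canonical dual window, boundedness of analysis and synthesis maps, and the factorization $a=D_\gamma C_\Phi a$ --- so in that sense there is nothing to compare.

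One point deserves tightening. In the range $\min(p,q)<1$ the synthesis bound $\|D_\gamma c\|_{M^{p,q}_m}\lesssim\|\gamma\|_{M^1_v}\|c\|_{\ell^{p,q}_m}$ you write down is not what one actually proves: membership $\gamma\in M^1_v$ is insufficient, and one needs the stronger information $\gamma\in M^r_v$ with $r\le\min(1,p,q)$ (equivalently, that the sampled kernel lies in $\ell^{\min(1,p),\min(1,q)}_v$, exactly as you remark a few lines later for the analysis side). The Gr\"ochenig--Leinert theorem you invoke only delivers $\gamma\in M^1_v$, so as stated your chain of inequalities has a gap when $\min(p,q)<1$. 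For the specific Gaussian window $\Phi$ on a rectangular lattice with $\alpha\beta<1$ this is harmless, since Janssen's explicit analysis of Gaussian Gabor frames yields $\gamma\in\cS(\rdd)$, hence $\gamma\in M^r_v$ for every $r>0$; but you should cite that fact rather than Gr\"ochenig--Leinert at this step. Alternatively, the route actually taken in \cite{Galperin2004} avoids the dual window altogether by working through Wiener amalgam estimates for the STFT with a Schwartz window, which sidesteps this issue entirely.
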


\subsection{Time-frequency Tools}
In the sequel we shall need to compute the STFT of the cross-Wigner distribution, contained below \cite[Lemma 14.5.1]{grochenig}: 
\begin{lemma}\label{STFTSTFT}
	Fix a nonzero  $g \in \cS (\Ren ) $ and let  $\Phi=W (g , g ) \in\sch(\Renn)$. Then the STFT of $W(f _1,
	f _2) $ with respect to the window $\Phi $ is given by
	\begin{equation}
	\label{eql4}
	{ {V}}_\Phi (W(f_1,f_2)) (z, \zeta ) =e^{-2\pi i
		z_2\z_2}{\overline{V_{g
			}f_2(z_1+\frac{\z_2}2,z_2-\frac{\z_1}2})}V_{g
	}f_1(z_1-\frac{\z_2}2,z_2+\frac{\z_1}2)\, .
	\end{equation}	
\end{lemma}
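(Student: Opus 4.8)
The plan is to derive \eqref{eql4} directly from the definitions, exploiting the fact that both the cross-Wigner distribution and the STFT are, up to normalizations, instances of the same bilinear integral transform. First I would compute $V_\Phi(W(f_1,f_2))(z,\zeta)$ from scratch: by definition this equals $\int_{\Renn} W(f_1,f_2)(w)\,\overline{\Phi(w-z)}\,e^{-2\pi i w\cdot\zeta}\,\ud w$, and I substitute the integral expression \eqref{eq3232} for both $W(f_1,f_2)$ and $\Phi=W(g,g)$. This produces a fourfold integral over $x$, $\omega$ (the phase-space variable $w=(x,\omega)$), $t$ (the Wigner variable of $f_1,f_2$), and $s$ (the Wigner variable of $g$). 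The key simplification is that the $\omega$-integration is a Fourier transform in disguise and yields a delta distribution; carrying it out collapses the fourfold integral to a twofold one. At that point I would reorganize the remaining exponentials, collecting the phase into the product of two STFT-type integrals of $f_1$ and $f_2$ against $g$.

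The main bookkeeping step is tracking the arguments. After the $\omega$-integration forces a linear relation among $t$, $s$, and the components of $z$ and $\zeta$, I would change variables so that the remaining integrals factor as $\overline{V_g f_2(\cdot,\cdot)}\cdot V_g f_1(\cdot,\cdot)$ with the shifted arguments $z_1\mp\tfrac{\zeta_2}{2}$ and $z_2\pm\tfrac{\zeta_1}{2}$ appearing as claimed, and the leftover scalar phase consolidating to $e^{-2\pi i z_2\zeta_2}$. A cleaner alternative, which I would actually prefer, is to use the known covariance/metaplectic identities: the Wigner transform $W(f_1,f_2)$ is (up to a partial Fourier transform and a linear change of variables) the tensor $f_1\otimes\overline{f_2}$, and likewise $\Phi=W(g,g)$ corresponds to $g\otimes\overline g$. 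Then $V_\Phi(W(f_1,f_2))$ becomes $V_{g\otimes\overline g}(f_1\otimes\overline{f_2})$ conjugated by an explicit symplectic matrix, and $V_{g\otimes\overline g}(f_1\otimes\overline{f_2})(u,\eta)=V_g f_1(u_1,\eta_1)\,\overline{V_g f_2(u_2,-\eta_2)}$ factors by the tensor property of the STFT. Substituting the symplectic change of variables relating $(u,\eta)$ to $(z,\zeta)$ then yields \eqref{eql4}, with the quadratic phase $e^{-2\pi i z_2\zeta_2}$ arising from the chirp in the metaplectic operator.

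The hard part will be the change-of-variables accounting and, in particular, getting every sign and every factor of $2$ in the shifted arguments exactly right; this is routine but error-prone, and it is precisely where the normalization conventions for the STFT in \eqref{eqi2} and for the cross-Wigner distribution in \eqref{eq3232} must be used consistently. Since the statement is \cite[Lemma 14.5.1]{grochenig}, I would either reproduce the computation with these conventions or simply cite it; for the purposes of this note, invoking the reference after indicating the tensor-factorization mechanism suffices, and I would present only the structural argument above rather than the full fourfold-integral manipulation.
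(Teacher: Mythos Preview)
Your proposal is correct: both the direct fourfold-integral computation and the tensor/metaplectic factorization you outline lead to \eqref{eql4}, and the paper itself gives no proof at all, simply citing \cite[Lemma~14.5.1]{grochenig}. Since your sketch already identifies the mechanism (the $\omega$-integration producing a delta, or equivalently the STFT of a tensor product factoring) and you explicitly note that invoking the reference suffices, there is nothing to add; your treatment is in fact more informative than the paper's.
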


 The following properties of the STFT (cf. \cite[Lemma 1]{CG05}) will be used to prove necessary Schatten class conditions for localization operators.
\begin{lemma} 
	If
	$z=(z_1,z_2)\in\Renn$,  $\zeta=(\zeta_1,\zeta_2)\in\Renn$, then
	\begin{align}\label{eqr4}
	T_{(z_1,z_2)}({\overline{V_{\f_1}f}}\cdot V_{\f_2}g ) \phas &=
	{\overline{V_{\f_1}(M_{z_2}T_{z_1}f)}\phas} \,
	V_{\f_2}(M_{z_2}T_{z_1}g)\phas, \,\\
	\label{eqr5}
	M_{(\z_1,\z_2)}\big({\overline{V_{\f_1}f}}\,V_{\f_2}g \big) \phas
	&= \overline{V_{\f_1}f\phas
	}  \, V_{(M_{\z_1}T_{-\z_2}\f_2)}( M_{\z_1}T_{-\z_2}g)\phas, \, \\
	\label{bo}
	M_\zeta T_z({\overline{V_{\f_1}f}}V_{\f_2}g) &=
	{\overline{V_{\f_1}(M_{z_1}T_{z_2}f)}}V_{(M_{\zeta_1}T_{-\zeta_2}\f_2)}(M_{\zeta_1}T_{-\zeta_2}M_{z_1}T_{z_2}g).
	\end{align}
\end{lemma}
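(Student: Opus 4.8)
The plan is to prove the three displayed identities in Lemma~\ref{eqr4}--\eqref{bo} by direct computation from the definition of the STFT, exploiting only the commutation relations between translations and modulations together with the sesquilinearity of the map $(f,g)\mapsto\overline{V_{\f_1}f}\cdot V_{\f_2}g$. The only structural fact needed is that for a product $F(z)=\overline{V_{\f_1}f(z)}\,V_{\f_2}g(z)$ on $\Renn$, a translation $T_{(z_1,z_2)}$ simply shifts the argument of the underlying STFTs, while a modulation $M_{(\z_1,\z_2)}$ inserts an exponential factor $e^{2\pi i(\z_1 x+\z_2\o)}$; the work is entirely in absorbing that factor, and the shift, into the window of one of the two STFTs via the formulas $M_\eta T_\xi=e^{2\pi i\xi\eta}T_\xi M_\eta$ and the covariance identity $V_g(M_\o T_x f)(y,\eta)=e^{-2\pi i x(\eta-\o)}V_gf(y-x,\eta-\o)$ (equivalently, in the other variable, $V_{M_\mu T_\nu g}(M_\mu T_\nu g)$-type rewritings).

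First I would establish \eqref{eqr4}. Write $T_{(z_1,z_2)}F\phas=\overline{V_{\f_1}f(x-z_1,\o-z_2)}\,V_{\f_2}g(x-z_1,\o-z_2)$. Then use the standard STFT covariance formula to recognize $V_{\f_1}f(x-z_1,\o-z_2)$ up to a unimodular factor as $V_{\f_1}(M_{z_2}T_{z_1}f)\phas$, and likewise for $g$; the unimodular factors from the two STFTs are complex conjugates of one another (one carries a bar), so they cancel in the product, which is precisely why the right-hand side of \eqref{eqr4} has no exponential prefactor. Next, for \eqref{eqr5}, I would multiply $F\phas$ by $e^{2\pi i(\z_1 x+\z_2\o)}$ and attach this factor to the \emph{second} STFT $V_{\f_2}g\phas$ only; the identity $e^{2\pi i(\z_1 x+\z_2\o)}V_{\f_2}g\phas=V_{M_{\z_1}T_{-\z_2}\f_2}(M_{\z_1}T_{-\z_2}g)\phas$ is obtained by writing out $V_{\f_2}g\phas=\la g,M_\o T_x\f_2\ra$, carrying the exponential inside the inner product, and rearranging the resulting time-frequency shifts acting on both $g$ and $\f_2$ using $M_\o T_x M_{\z_1}T_{-\z_2}=e^{2\pi i(\cdots)}M_{\z_1}T_{-\z_2}M_\o T_x$, checking that all the scalar phases match. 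Since the modulation is applied only to the $g$-factor, the $\overline{V_{\f_1}f}\phas$ term is untouched, as claimed. Finally \eqref{bo} is just the composition $M_\zeta T_z=M_\zeta\circ T_z$: apply \eqref{eqr4} first (with the roles of $z_1,z_2$ as written there — note the statement of \eqref{bo} uses $M_{z_1}T_{z_2}$, so one must be careful about which coordinate plays which role and track that relabeling) and then \eqref{eqr5} to the output, collecting the time-frequency shifts on $g$ into the single compound operator $M_{\zeta_1}T_{-\zeta_2}M_{z_1}T_{z_2}$.

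The main obstacle, such as it is, is purely bookkeeping: keeping straight the several conventions in play simultaneously — the $2\pi$ normalization in $M_\o$, the sign of $x$ inside $T_x$, the asymmetric placement $M_\o T_x$ (rather than $T_x M_\o$) in the definition \eqref{eqi2}, the conjugation on the $\f_1$-factor, and the coordinate swap between \eqref{eqr4} and \eqref{bo} — so that every unimodular phase is accounted for and the claimed cancellations genuinely occur. There is no analytic difficulty and no convergence issue, since for $f\in\cS'(\Ren)$ and $\f_i\in\cS(\Ren)$ all STFTs are smooth functions of polynomial growth and the manipulations are pointwise identities; by density and continuity it suffices in fact to verify them for $f,g\in\cS(\Ren)$, where everything is an absolutely convergent integral. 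I would present \eqref{eqr4} and \eqref{eqr5} with the key line of computation shown, and then state \eqref{bo} as an immediate consequence of composing the two, leaving the phase verification to the reader.
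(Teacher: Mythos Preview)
The paper does not prove this lemma; it is quoted from \cite[Lemma~1]{CG05} and stated without proof. Your direct-computation plan is correct and is exactly the standard argument: the covariance identity $V_g(M_{z_2}T_{z_1}f)(x,\omega)=e^{2\pi i z_1(\omega-z_2)}V_gf(x-z_1,\omega-z_2)$ together with the commutation relation $T_aM_b=e^{-2\pi i ab}M_bT_a$ are the only ingredients, and the phase cancellation in \eqref{eqr4} coming from the complex conjugate on the first factor is precisely the mechanism you describe. Your remark about the coordinate swap in \eqref{bo} is also well taken: the paper's own application of \eqref{bo} in the sketch of the proof of Theorem~\ref{main} produces $M_{z_2}T_{z_1}$ (with $z=(\alpha k_1,\alpha k_2)$ yielding $M_{\alpha k_2}T_{\alpha k_1}$), consistent with \eqref{eqr4}, so the $M_{z_1}T_{z_2}$ in \eqref{bo} is a typo you should silently correct rather than try to match.
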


\section{Sufficient Conditions for  Schatten Class $S_p$, $0<p\leq \infty$}
In this Section we present sufficient conditions for  Schatten Class properties of localization operators. The Banach case $p\geq 1$ was studied in \cite{CG02,CG05}. The main result (cf. Theorem \ref{class} below) will take care of  the full range $0<p\leq\infty$. 

First, we need to recall similar properties for Weyl operators, obtained in several papers, we refer the interested reader to \cite{CG02,grochenig,GH99,Sjo94,toft2004}.

\begin{theorem}\label{Charly1} For $0<p\leq\infty$, we have:\\
	(i) If $0<  p \leq 2$ and  $\sigma \in M^{p} (\Renn )$, then
	$L_\sigma \in S_p$ and
	$\|L_\sigma \|_{S_p} \lesssim \|\sigma \|_{M^{p}}$.\\
	(ii) If $2\leq p \leq \infty$ and  $\sigma \in M^{p,p'} (\Renn )$, then
	$L_\sigma \in S_p$ and
	$\|L_\sigma \|_{S_p} \lesssim \|\sigma \|_{M^{p,p'}}$.
\end{theorem}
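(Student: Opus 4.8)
The plan is to reduce the Schatten-class statement for Weyl operators to the Hilbert--Schmidt case $p=2$ by interpolation, using as endpoints the trace-class estimate $p=1$ and the $L^2$-boundedness estimate $p=\infty$, and then to read off the intermediate range from the nested structure of the spaces. First I would establish the Hilbert--Schmidt case: if $\sigma\in M^2(\rdd)=M^{2,2}(\rdd)$, then $L_\sigma\in S_2(\lrd)$ with $\|L_\sigma\|_{S_2}\asymp\|\sigma\|_{M^2}$. This is the classical identity $\|L_\sigma\|_{S_2}=\|\sigma\|_{L^2(\rdd)}$ combined with $M^{2,2}=L^2$; concretely one checks that the integral kernel of $L_\sigma$ has $L^2(\rd\times\rd)$ norm equal (up to a normalization) to $\|\sigma\|_{L^2(\rdd)}$, so $L_\sigma$ is Hilbert--Schmidt precisely when $\sigma\in L^2$.

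\textbf{Endpoint estimates.} For $p=\infty$, the statement is the Calder\'on--Vaillancourt type bound: $\sigma\in M^{\infty,\infty}(\rdd)$ implies $L_\sigma$ is bounded on $\lrd$, i.e.\ $L_\sigma\in S_\infty$, with $\|L_\sigma\|_{\mathrm{op}}\lesssim\|\sigma\|_{M^{\infty}}$; this is proved by expanding $\sigma$ over a Gabor frame via Theorem \ref{framesmod} and summing the operator norms of the elementary time-frequency shift pieces, which decay because the Gaussian has rapidly decaying STFT (see \eqref{Gauss0}). For $p=1$, one shows $\sigma\in M^{1,\infty}(\rdd)$ (or at least $M^{1,1}$, which suffices for what we need) implies $L_\sigma\in S_1$ with $\|L_\sigma\|_{S_1}\lesssim\|\sigma\|$; again this follows from the Gabor expansion of $\sigma$, writing $L_\sigma$ as an absolutely convergent (in $S_1$) series of rank-one-type operators whose trace norms are controlled by the $\ell^1$ sum of the Gabor coefficients against the rapidly decaying kernel. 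The key technical input throughout is Lemma \ref{STFTSTFT} together with the frame characterization \eqref{idea}, which converts modulation-space membership of $\sigma$ into summability of a sequence.

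\textbf{Interpolation and conclusion.} With the endpoints $(M^{1,\infty},S_1)$ and $(M^{\infty,\infty},S_\infty)$ — or more conveniently $(M^1,S_1)$ and $(M^\infty,S_\infty)$ — in hand, complex interpolation of the bilinear map $\sigma\mapsto L_\sigma$ (the Weyl correspondence is an isometry up to constants between suitable spaces) yields: for $2\le p\le\infty$, writing $\tfrac1p$ as an interpolation parameter between $0$ and $1$, the space $[M^{1},M^{\infty}]_{\theta}$ with the appropriate $\theta$ equals $M^{p,p'}$ (the first index interpolating $1\to\infty$ giving $p$, the second $1\to\infty$ giving... actually one must be careful: interpolating $M^{1,1}$ and $M^{\infty,\infty}$ gives $M^{p,p}$, not $M^{p,p'}$). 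To get the correct target $M^{p,p'}$ one should instead interpolate between $M^{2,2}=S_2$ and $M^{\infty,\infty}\subset S_\infty$: then $[M^{2,2},M^{\infty,\infty}]_\theta=M^{p,p'}$ exactly when $\theta=1-2/p$, giving the first index $\tfrac1p=\tfrac{1-\theta}2$ hence index $p$, and the second $\tfrac{1}{q}=\tfrac{1-\theta}{2}$... this still needs the precise computation. Symmetrically, for $0<p\le2$ one interpolates between $S_1$ (from $M^{1,1}$) and $S_2$ (from $M^{2,2}$), obtaining $M^{p,p}=M^p$ as the symbol class and $S_p$ as the target. I would carry out these two interpolations explicitly, checking the index arithmetic, and then invoke Theorem \ref{inclusionG} to absorb any gap between the interpolation space and the stated class (e.g.\ $M^{p,p'}\hookrightarrow M^{2,p'}$ or similar).

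\textbf{Main obstacle.} The delicate point is the index bookkeeping in the interpolation: one must interpolate \emph{mixed-norm} modulation spaces and match the resulting $(p,q)$ against the Schatten endpoints, and a naive interpolation of $(M^1,S_1)$ with $(M^\infty,S_\infty)$ produces $M^{p,p}$ rather than the sharp $M^{p,p'}$ in the range $p\ge2$. The resolution is to use $S_2$ as one of the endpoints — where the identification with $L^2=M^{2,2}$ is exact — so that the interpolation is genuinely between $M^{2,2}$ and $M^{\infty,\infty}$ on one side and between $M^{1,1}$ and $M^{2,2}$ on the other; this is the step where the argument of \cite{Sjo94,toft2004,GH99} must be assembled carefully, and where one must confirm that complex interpolation applies to the Schatten scale $S_p$ (valid for $1\le p\le\infty$) and to the Banach modulation scale. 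Everything else is a routine application of the Gabor-frame machinery already recorded in Theorems \ref{framesmod} and \ref{inclusionG} and Lemma \ref{STFTSTFT}.
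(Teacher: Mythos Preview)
Your interpolation architecture is essentially the right one for the Banach range $1\le p\le\infty$, but there are two genuine gaps.

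First, the endpoint you invoke for $p=\infty$ is false: it is \emph{not} true that $\sigma\in M^{\infty,\infty}(\rdd)$ implies $L_\sigma$ is bounded on $\lrd$. The correct Sj\"ostrand-type endpoint (cf.\ \cite{Sjo94}) is $\sigma\in M^{\infty,1}(\rdd)$, a strictly smaller space. With the correct pair $(M^{\infty,1},S_\infty)$ and the Hilbert--Schmidt pair $(M^{2,2},S_2)$, complex interpolation does yield $M^{p,p'}\to S_p$ for $2\le p\le\infty$: writing $\theta=1-2/p$, the first index interpolates as $\tfrac{1-\theta}{2}+\tfrac{\theta}{\infty}=\tfrac1p$ and the second as $\tfrac{1-\theta}{2}+\tfrac{\theta}{1}=1-\tfrac1p=\tfrac1{p'}$, while the Schatten index matches. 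Your attempt to interpolate against $M^{\infty,\infty}$ cannot work, and this is exactly why your index arithmetic refused to close; the ``obstacle'' you identified is not a bookkeeping subtlety but an incorrect endpoint.

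Second, your sketch does not cover the quasi-Banach range $0<p<1$ at all. You yourself note that complex interpolation on the Schatten scale is available only for $1\le p\le\infty$; interpolating $(M^{1,1},S_1)$ with $(M^{2,2},S_2)$ therefore gives $M^p\to S_p$ only for $1\le p\le2$, not for $0<p\le2$ as claimed in part~(i). The extension to $0<p<1$ needs a genuinely different argument---for instance a Gabor-frame decomposition of $\sigma$ combined with the $p$-triangle inequality in the quasi-Banach $S_p$, as carried out in \cite{ToftquasiBanach2017}---and this is precisely the portion of the theorem that is new relative to the older references.

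For comparison, the paper does not give a self-contained proof of this theorem: it simply cites \cite[Theorem~3.1]{CG02} for $p\ge1$ and \cite[Theorem~3.4]{ToftquasiBanach2017} for $0<p<1$.
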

\begin{proof} The proof for $p\geq 1$ can be found in \cite[Theorem 3.1]{CG02}, see also references therein. The case $0<p<1$ is contained in \cite[Theorem 3.4]{ToftquasiBanach2017}.
\end{proof}

We now focus on the properties of the cross-Wigner distribution,  which enjoys the following property. 

\begin{theorem} \label{T1} Assume $p_i,q_i,p,q\in (0,\infty]$, $i=1,2$, $s\in \bR$, such that
	\begin{equation}\label{WIR}
	p_i,q_i\leq q,  \ \quad i=1,2
	\end{equation}
	and that
	\begin{equation}\label{Wigindexsharp}
	\frac1{p_1}+\frac1{p_2}\geq \frac1{p}+\frac1{q},\quad \frac1{q_1}+\frac1{q_2} \geq \frac1{p}+\frac1{q}.
	\end{equation}
	Then,
	if $f_1\in M^{p_1,q_1}_{v_{|s|}}(\Ren)$ and
	$f_2\in M^{p_2,q_2}_{v_s}(\Ren)$ we have  $W(f_1,f_2)\in
	M^{p,q}_{1\otimes v_s}(\Renn)$, and
	\begin{equation}\label{wigest}
	\| W(f_1,f_2)\|_{M^{p,q}_{1\otimes v_s}}\lesssim
	\|f_1\|_{M^{p_1,q_1}_{v_{|s|}}}\| f_2\|_{M^{p_2,q_2}_{v_s}}.
	\end{equation}
	
	\par
	Vice versa, assume that there exists a constant $C>0$ such that
	\begin{equation}\label{Wigestsharp}
	\|W(f_1,f_2)\|_{M^{p,q}}\leq C \|f_1\|_{M^{p_1,q_1}} \|f_2\|_{M^{p_2,q_2}},\quad \forall f_1,f_2\in\cS(\rdd).
	\end{equation}
	Then \eqref{WIR} and \eqref{Wigindexsharp} must hold.
\end{theorem}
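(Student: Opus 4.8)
The key tool is Lemma \ref{STFTSTFT}, which expresses the STFT of $W(f_1,f_2)$ with respect to the window $\Phi=W(g,g)$ (with $g$ a normalized Gaussian, so that $\Phi$ is a $2d$-dimensional Gaussian of the admissible type) in terms of products of STFTs of $f_1$ and $f_2$ in twisted coordinates. Concretely, $|V_\Phi(W(f_1,f_2))(z,\zeta)| = |V_g f_1(z_1-\tfrac{\zeta_2}{2}, z_2+\tfrac{\zeta_1}{2})|\,|V_g f_2(z_1+\tfrac{\zeta_2}{2}, z_2-\tfrac{\zeta_1}{2})|$. The plan for the \emph{sufficiency} part is thus: first, rewrite the $M^{p,q}_{1\otimes v_s}$-(quasi)norm of $W(f_1,f_2)$ as a mixed-norm Lebesgue quantity in the four variables $(z_1,z_2,\zeta_1,\zeta_2)$ using this formula, where the weight $v_s$ acts on $\zeta=(\zeta_1,\zeta_2)$ only. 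Then perform the linear change of variables $u = z_1-\tfrac{\zeta_2}{2}$, $v = z_2+\tfrac{\zeta_1}{2}$, $u' = z_1+\tfrac{\zeta_2}{2}$, $v' = z_2-\tfrac{\zeta_1}{2}$ (which has constant Jacobian), so that the integrand factors as $|V_g f_1(u,v)|\,|V_g f_2(u',v')|$ with the weight $v_s(\zeta)$ re-expressed, up to submultiplicativity, as a product of a $v_{|s|}$-type weight in the $f_1$-variables and a $v_s$-type weight in the $f_2$-variables. At this point one applies a sequence of H\"older inequalities in the individual integration variables, matching up the exponents; the conditions \eqref{WIR} and \eqref{Wigindexsharp} are exactly what make the relevant H\"older bookkeeping valid (the inequalities in \eqref{Wigindexsharp} allow one to enlarge exponents via Theorem \ref{inclusionG} before applying H\"older, and \eqref{WIR} guarantees the outer $q$-norm dominates the inner ones). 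This is essentially the argument for the unweighted/Banach case in \cite{CG05,Wignersharp2018} extended to $0<p,q\le\infty$; the quasi-Banach range requires only that H\"older's inequality and the inclusions of Theorem \ref{inclusionG} remain available for exponents below $1$, which they do.

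For the \emph{necessity} part, I would argue by testing \eqref{Wigestsharp} on well-chosen families of functions and extracting the index constraints. To obtain \eqref{WIR}, take $f_1=f_2=$ a fixed Gaussian $g$; then $W(g,g)=\Phi$ is a Gaussian, and using the Gabor-frame characterization of modulation norms (Theorem \ref{framesmod}) on both sides, the inequality \eqref{Wigestsharp} applied to dilates or to suitable finite truncations forces the mixed $\ell^{p,q}$ quasinorm on $\bZ^{4d}$ to be controlled by products of $\ell^{p_i,q_i}$ quasinorms on $\bZ^{2d}$; comparing the scaling/summability in the separate blocks of indices yields $p_i,q_i\le q$. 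To obtain \eqref{Wigindexsharp}, I would instead test on tensor products $f_i = h_i\otimes k_i$ or on modulated/translated bumps and exploit the twisted change of variables above in reverse: choosing $f_1,f_2$ whose STFTs are (approximately) characteristic functions of boxes of varying side lengths in the $(u,v)$ and $(u',v')$ coordinates, the left-hand side of \eqref{Wigestsharp} picks up a box in $(z,\zeta)$ whose dimensions couple the two sets of variables, and letting the side lengths tend to $0$ or $\infty$ forces the stated inequalities among the reciprocals of the exponents. This is the standard ``box testing'' technique for sharpness of such bilinear estimates.

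\textbf{Main obstacle.} The sufficiency direction is mostly careful but routine bookkeeping; the delicate point there is organizing the chain of H\"older inequalities so that each application is legitimate in the quasi-Banach regime and so that \eqref{WIR} and \eqref{Wigindexsharp} are used optimally (in particular one must first inflate the exponents $p_i,q_i$ to auxiliary values using Theorem \ref{inclusionG} and \eqref{Wigindexsharp} before splitting the product). The genuinely hard part is the necessity direction: constructing explicit test families whose STFTs approximate indicator functions of boxes with \emph{independently tunable} side lengths in all four coordinate directions, and doing so within the Schwartz class as required by \eqref{Wigestsharp}, while keeping precise enough control of the constants to read off all the index inequalities in \eqref{WIR} and \eqref{Wigindexsharp} simultaneously. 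I would expect to borrow such constructions from \cite{CG05} and \cite{Wignersharp2018} and adapt the limiting arguments to the quasi-Banach exponents.
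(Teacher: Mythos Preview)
Your starting point for the sufficiency direction is correct: Lemma \ref{STFTSTFT} and the factorization of $|V_\Phi(W(f_1,f_2))|$ are exactly what the paper uses. However, the global four-variable substitution $(z,\zeta)\mapsto(u,v,u',v')$ you propose is problematic when $p\neq q$: the mixed norm $L^q_\zeta L^p_z$ is \emph{not} invariant under a linear map that mixes the $z$- and $\zeta$-blocks, so the integrand may factor but the norm does not. The paper avoids this by shifting only the inner variable, $z\mapsto z-\tilde\zeta/2$, which turns the inner $L^p_z$-integral into the convolution $(|V_g f_2|^p\ast|(V_g f_1)^*|^p)(\tilde\zeta)$; one then reads
\[
\|W(f_1,f_2)\|_{M^{p,q}_{1\otimes v_s}}^p\asymp\big\||V_g f_2|^p\ast|(V_g f_1)^*|^p\big\|_{L^{q/p}_{v_{ps}}}
\]
and applies Young's inequality for weighted mixed-norm spaces (Galperin \cite{Galperin2014}) in the regime $q/p\ge 1$. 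The conditions \eqref{WIR}--\eqref{Wigindexsharp} enter precisely as the Young exponents $r_i=p_i/p,\ s_i=q_i/p\ge1$ satisfying $1/r_1+1/r_2=1/s_1+1/s_2=1+p/q$; the remaining cases (some $p_i$ or $q_i$ below $p$, or $p>q$, or $p=q=\infty$) are reduced to this one via the inclusions of Theorem \ref{inclusionG}. Your H\"older scheme could perhaps be repaired to reproduce Young's inequality in disguise, but as written it does not handle the nested $L^q_\zeta L^p_z$ structure.

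For the necessity direction your plan diverges more substantially from the paper and has a gap. Testing \eqref{Wigestsharp} on a \emph{fixed} Gaussian $f_1=f_2=g$ gives a single inequality between finite constants and cannot force any index relation; likewise the Gabor-frame characterization of $\|W(g,g)\|_{M^{p,q}}$ just produces a number. The paper (following \cite{Wignersharp2018}) instead tests on the one-parameter family of rescaled Gaussians $\varphi_\lambda(x)=e^{-\pi\lambda x^2}$, for which one has the exact asymptotics
\[
\|\varphi_\lambda\|_{M^{r,s}}\asymp \lambda^{-d/(2r)}(\lambda+1)^{-\frac d2(1-\frac1r-\frac1s)},\qquad 0<r,s\le\infty,
\]
and $W(\varphi_\lambda,\varphi_\lambda)$ is again an explicit Gaussian on $\rdd$ whose $M^{p,q}$-norm has the same type of asymptotics. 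Comparing powers of $\lambda$ as $\lambda\to 0^+$ and $\lambda\to\infty$ yields \eqref{WIR} and \eqref{Wigindexsharp} directly, with no need for box constructions or approximate indicator functions. This single-family test is both simpler and sharper than the strategy you outline, and it works uniformly across the quasi-Banach range.
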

\begin{proof} 
	\emph{Sufficient Conditions.} The result  for the indices $p_i,q_i,p,q\in [1,\infty]$ is proved in \cite[Theorem 3.1]{Wignersharp2018}.  The general case follows easily from that one, since the main tool is provided by the inclusion relations for modulation spaces in \eqref{inclusionG}. We detail its steps for sake of clarity.\par  
	First, study the case both $0<p,q<\infty$. Let $g\in \cS (\rd ) $ and set $\Phi=W(g,g)\in\sch(\Renn)$.  If $\zeta
	= (\z_1,\z_2)\in
	\Renn$, we write $\tilde{\zeta } = (\zeta _2,-\zeta _1)$. Then, from 
	Lemma \ref{STFTSTFT}, 
	\begin{equation}\label{e0}
	|{{V}}_\Phi (W(f_1,f_2))(z,\zeta)| =| V_g f_2(z
	+\tfrac{\tilde{\z }}{2})| \,  |V_g f_1(z - \tfrac{\tilde{\z }}{2})| \,.
	\end{equation}
	Hence,
	\begin{equation*}
	\|W( f_1,f_2)\|_{M^{p,q}_{1\otimes v_s}}  \asymp
	\left(\intrdd\!\left(\intrdd\!
	| V_g f_2(z +\tfrac{\tilde{\z }}{2})|^p \,  |V_g f_1(z -
	\tfrac{\tilde{\z }}{2})|^p   \, dz \right)^\frac{q}{p} \, \langle \zeta \rangle
	^{sq} \, d\zeta \right)^{1/q}.
	\end{equation*}
	Making the change of variables $z \mapsto z-\tilde{\zeta } /2$, the
	integral over $z$ becomes the convolution $(|V_g f_2|^p\ast
	|(V_g{f_1})^*|^p)(\tilde{\zeta })$,
	and observing that $(1\otimes v_s) (z,\zeta ) = \langle \zeta \rangle ^s =
	v_s (\zeta )= v_s (\tilde{\zeta })$, we obtain
	\begin{eqnarray*}
		\|W(f_1,f_2)\|_{M^{p,q}_{1\otimes v_s}}
		&\asymp&
		\left(\iint_{\Renn}\!(|V_g f_2|^p\ast |(V_g {f_1})^*|^p)^\frac{q}{p}(\tilde{\zeta})
		v_s(\tilde{\zeta})^{q} \, d\zeta \right)^{1/p}\\
		&=& \| \, |V_g f_2|^p\ast |(V_g{ f_1})^*|^p \, \|^{\frac 1 p}_{L^\frac{q}{p}_{v _{ps}}}.
	\end{eqnarray*}
	Hence 
	\begin{equation}\label{e1}
	\|W(f_1,f_2)\|^p_{M^{p,q}_{1\otimes v_s}}\asymp \| \, |V_g f_2|^p\ast |(V_g{ f_1})^*|^p \, \|_{L^\frac{q}{p}_{v _{ps}}}.
	\end{equation}
	\emph{Case $0<p\leq q<\infty$}.\par \emph{ Step 1.}  Consider first the case $p\leq p_i,q_i$, $i=1,2$, satisfying  the condition
	 \begin{equation}\label{Wigindex}
	\frac1{p_1}+\frac1{p_2}=\frac1{q_1}+\frac1{q_2}=\frac1{p}+\frac1{q},
	\end{equation}
	(and hence $p_i,q_i\leq q$,  $i=1,2$).
	Since $q/p\geq 1$, we can apply Young's Inequality  for mixed-normed spaces \cite{Galperin2014} and majorize \eqref{e1} as follows
	\begin{align*}
	\|W(f_1,f_2)\|^p_{M^{p,q}_{1\otimes v_s}}&\lesssim 
	\| \, |V_g f_2|^p\|_{L^{r_2,s_2}_{v _{p|s|}}}\|\, |(V_g{ f_1})^*|^p  \|_{L^{r_1,s_1}_{v _{ps}}} \, \\
	&= \| |V_g{ f_1}|^p  \|_{L^{r_1,s_1}_{v _{p|s|}}} \| \, |V_g f_2|^p\|_{L^{r_2,s_2}_{v _{ps}}}\, \\
	&= \| V_g{ f_1}  \|^p_{L^{pr_1,ps_1}_{v _{|s|}}} \| V_g f_2\|^p_{L^{p r_2,ps_2}_{v _{s}}}\, ,
	\end{align*}
	for every $1\leq r_1,r_2,s_1,s_2\leq\infty$ such that 
	\begin{equation}\label{e2}
	\frac 1{r_1}+\frac{1}{r_2}=\frac 1{s_1}+\frac{1}{s_2}=1+ \frac{p}{q}.
	\end{equation}
	Choosing $r_i=p_i/p\geq 1$, $s_i=q_i/p\geq 1$, $i=1,2$, the indices' relation \eqref{e2} becomes \eqref{Wigindex}  and we obtain 
	$$	\|W(f_1,f_2)\|_{M^{p,q}_{1\otimes v_s}}\lesssim \| V_g{ f_1}  \|_{L^{p_1,q_1}_{v _{|s|}}} \| V_g f_2\|_{L^{p_2,q_2}_{v _{s}}}\asymp \|f_1\|_{M^{p_1,q_1}_{v_{|s|}}}\|f_2\|_{M^{p_2,q_2}_{v_s}}.
	$$
	Now, still assume $p\leq p_i,q_i$, $i=1,2$ but
	$$\frac 1{p_1}+\frac 1{p_2}\geq \frac{1}{p}+\frac 1q,\quad \frac 1{q_1}+\frac 1{q_2}= \frac{1}{p}+\frac 1q,
	$$
	(hence $p_i,q_i\leq q$, $i=1,2$). We set $u_1=t p_1$,  and look for $t\geq 1$ (hence $u_1\geq p_1$) such that
	$$\frac 1 {u_1}+\frac 1{p_2}= \frac{1}{p}+\frac 1q
	$$
	that gives
	$$0<\frac 1t=\frac{p_1}{p}+\frac{p_1}{q}-\frac{p_1}{p_2}\leq1
	$$
	because $p_1(1/p+1/q)-p_1/p_2\leq p_1(1/p_1+1/p_2)-p_1/p_2=1$ whereas the lower bound of the previous estimate follows by   $1/(tp_1)=1/p+1/q-1/p_2>0$ since $p\leq p_2$.
	Hence the previous part of the proof gives 
	\begin{equation*}
	\|W(f_1,f_2)\|_{M^{p,q}_{1\otimes v_s}}\lesssim \|f_1\|_{M^{u_1,q_1}_{v_{|s|}}}\|f_2\|_{M^{p_2,q_2}_{v_s}}\lesssim \|f_1\|_{M^{p_1,q_1}_{v_{|s|}}}\|f_2\|_{M^{p_2,q_2}_{v_s}},
	\end{equation*}
	where the last inequality follows by inclusion relations for modulations spaces $M^{p_1,q_1}_{v_s}(\rd)\subseteq M^{u_1,q_1}_{v_s}(\rd)$ for $p_1\leq u_1$.
	
	The general case 
	$$\frac 1{p_1}+\frac 1{p_2}\geq  \frac{1}{p}+\frac 1q,\quad \frac 1{q_1}+\frac 1{q_2}\geq \frac{1}{p}+\frac 1q,
	$$
	is similar.\par
	\noindent \emph{ Step 2}. Assume now that $0<p_i,q_i\leq q$, $i=1,2$, and satisfy relation \eqref{Wigindexsharp}.  If at least one out of the indices $p_1, p_2$ is less than $p$, assume for instance $p_1\leq p$, whereas $p\leq q_1,q_2$, then we proceed as follows. 
	We choose $u_1=p$, $u_2=q$, and deduce by the results in Step 1 (with $p_1=u_1$ and $p_2=u_2$) that
	$$ 	\|W(f_1,f_2)\|_{M^{p,q}_{1\otimes v_s}}\lesssim \|f_1\|_{M^{u_1,q_1}_{v_{|s|}}}\|f_2\|_{M^{u_2,q_2}_{v_s}}\lesssim\|f_1\|_{M^{p_1,q_1}_{v_{|s|}}}\|f_2\|_{M^{p_2,q_2}_{v_s}}
	$$ 
	where the last inequality follows by inclusion relations for modulation spaces, since 
	$p_1\leq u_1=p$ and $p_2\leq u_2=q$.\par
	Similarly we argue when at least one out of the indices $q_1, q_2$ is less than $p$ and $p\leq p_1,p_2$ or when at least one out of the indices $q_1, q_2$ is less than $p$ and at least one out of the indices $p_1, p_2$ is less than $p$. The remaining case $p\leq p_i,q_i\leq q$ is treated in Step 1.
	
	\noindent\emph{Case $0<p<q=\infty$.} The argument are similar to the case $0<p\leq q<\infty$.
	
	\noindent\emph{Case $p=q=\infty$.}  We use \eqref{e0} and the submultiplicative property of the weight $v_s$,
	\begin{align*}
	\|W(f_1,f_2)\|_{M^{\infty}_{1\otimes v_s}}&=\sup_{z,\zeta\in\rdd}| V_g f_2(z
	+\tfrac{\tilde{\z }}{2})| \,  |V_g f_1(z - \tfrac{\tilde{\z }}{2})| v_s(\zeta)\\
	&=\sup_{z,\zeta\in\rdd}|| V_g f_2(z)| \,  |(V_g f_1)^*(z - \tilde{\z })| v_s(\z )\\
	&=\sup_{z,\zeta\in\rdd}|| V_g f_2(z)| \,  |(V_g f_1)^*(z - \tilde{\z })| v_s(\tilde{\z })\\
	&\leq \sup_{z\in\rdd}(\|V_g f_1 v_{|s|}\|_{\infty} \,|V_g f_2(z) v_s(z)|)= \|V_g f_1 v_{|s|}\|_{\infty}\|V_g f_2 v_s\|_{\infty}\\
	&\asymp\|f\|_{M^\infty_{v_{|s|}}}\|g\|_{M^\infty_{v_s}}\leq \|f\|_{M^{p_1,q_1}_{v_{|s|}}}\|f\|_{M^{p_2,q_2}_{v_s}},
	\end{align*}
	for every $0< p_i,q_i\leq \infty$, $i=1,2$. 
	
	\noindent\emph{Case $p>q$.} Using the inclusion relations for modulation spaces, we majorize 
	$$\|W(f_1,f_2)\|_{M^{p,q}_{1\otimes v_s}}\lesssim \|W(f_1,f_2)\|_{M^{q,q}_{1\otimes v_s}}\lesssim \|f_1\|_{M^{p_1,q_1}_{v_{|s|}}}\|f_2\|_{M^{p_2,q_2}_{v_s}}$$
	for every $0< p_i,q_i\leq q$, $i=1,2$. Here we have applied  the case $p\leq q$ with $p=q$. Notice that in this case condition \eqref{Wigestsharp} is trivially satisfied, since from $p_1,q_i\leq q$ we infer $1/p_1+1/p_2\geq 1/q+1/q$, $1/q_1+1/q_2\geq 1/q+1/q$. This ends the proof of the sufficient conditions.\par 
	\emph{Necessary Conditions.} The proof works exactly the same as that of \cite[Theorem 3.5]{Wignersharp2018}. In fact, the main point is the use of   the $M^{r,s}$-norm of the rescaled Gaussian  $\f_\lambda(x)=\f(\sqrt{\lambda} x)$, with $\f(x)=e^{-\pi x^2}$, for which we reckon (see  also \cite[Lemma 3.2]{cordero2} and \cite[Lemma 1.8]{toft2004}):
	$$ \|\f_\lambda\|_{M^{r,s}}\asymp \lambda^{-\frac d {2r}}(\lambda+1)^{-\frac d2(1-\frac1s-\frac1r)},$$ 
	for every $0<r,s\leq\infty$.
\end{proof}

Based on the tools developed above, we establish the following
Schatten class  results for localization operators.

\begin{theorem}\label{class} For $s\geq 0$, we have the following statements.\\
	(i) If $0< p  <1$, then  the mapping $(a,\f _1, \f _2) \mapsto
	\aaf $ is  bounded  from
	$M^{p,\infty }_{1\otimes v_{-s}}(\rdd ) \times M^p_{v_s} (\rd )\times
	M^p_{v_s} (\rd )$ into $S_p$:
	$$\|\gaw\|_{S_p}\lesssim
	\|a\|_{M^{p,\infty}_{1\otimes v_{-s}}}\|\f_1\|_{M^p_{\vs}}\|\f_2\|_{M^p_{\vs}}\,
	.$$
	(ii) If $1\leq   p  \leq 2$, then  the mapping $(a,\f _1, \f _2) \mapsto
	\aaf $ is  bounded  from
	$M^{p,\infty }_{1\otimes v_{-s}}(\rdd ) \times M^1_{v_s} (\rd )\times
	M^p_{v_s} (\rd )$ into $S_p$:
	$$\|\gaw\|_{S_p}\lesssim
	\|a\|_{M^{p,\infty}_{1\otimes v_{-s}}}\|\f_1\|_{M^1_{\vs}}\|\f_2\|_{M^p_{\vs}}\,
	.$$
	(iii)  If $2 \leq p  \leq \infty$, then the mapping $(a,\f _1, \f _2) \mapsto
	\aaf $ is  bounded  from
	$M^{p,\infty }_{1\otimes v_{-s}} \times M^1_{v_s}\times M^{p'}_{v_s}$ into
	$S_p$:
	$$\|\gaw\|_{S_p}\lesssim
	\|a\|_{M^{p,\infty}_{1\otimes v_{-s}}}\|\f_1\|_{M^1_{\vs}}\|\f_2\|_{M^{p'}_{\vs}}\,
	.$$
\end{theorem}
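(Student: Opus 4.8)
The strategy is to combine the Weyl-operator representation \eqref{WA} with the Weyl-symbol Schatten bounds of Theorem \ref{Charly1}, the convolution relations of Proposition \ref{mconvmp}, and the sharp Wigner estimate of Theorem \ref{T1}. Recall from \eqref{eq2} that $\gaw = L_\sigma$ with $\sigma = a \ast W(\f_2,\f_1)$. Thus in each case it suffices to control the appropriate modulation-space norm of $\sigma$, namely $\|\sigma\|_{M^p}$ when $0<p\leq 2$ and $\|\sigma\|_{M^{p,p'}}$ when $2\leq p\leq\infty$, and then invoke Theorem \ref{Charly1}.

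\textbf{Step 1: estimate the Wigner term.} First I would apply Theorem \ref{T1} to $W(\f_2,\f_1)$, with the roles $f_1 = \f_2$, $f_2 = \f_1$ (so the weight $v_s$ sits on the second entry). For part (i), taking $p_1=q_1=p_2=q_2=p$ and target indices $p=q$ one checks that \eqref{WIR} holds trivially and \eqref{Wigindexsharp} reads $2/p\geq 2/p$, which is an equality; hence $W(\f_2,\f_1)\in M^{p}_{1\otimes v_s}(\rdd)$ with $\|W(\f_2,\f_1)\|_{M^p_{1\otimes v_s}}\lesssim \|\f_1\|_{M^p_{v_s}}\|\f_2\|_{M^p_{v_s}}$. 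For parts (ii) and (iii) I would instead take $\f_1\in M^1_{v_s}$ and $\f_2\in M^p_{v_s}$ (resp. $M^{p'}_{v_s}$), again choosing the output exponents so that \eqref{WIR}, \eqref{Wigindexsharp} are met, producing $W(\f_2,\f_1)$ in $M^1_{1\otimes v_s}$ (resp. $M^{p',p'}_{1\otimes v_s}$ — note $p'\leq 2\leq p$ forces the first component of the pair to be $p'$ here) with the corresponding bilinear bound.

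\textbf{Step 2: convolve with the symbol.} Next I would feed the above into Proposition \ref{mconvmp} with the weight split $m = 1\otimes v_{-s}$ for $\sigma$: writing $\nu = v_s$ on the frequency variable so that $m_1 = 1$, $m_2 = v_{-s}$, $v_1=1$, $v_2 = v_s$, the second factor of the convolution carries weight $v_1\otimes v_2\nu^{-1} = 1\otimes 1$ on the first slot and $1\otimes v_s$ on the... — more precisely one matches $M^{p,\infty}_{1\otimes v_{-s}} \ast M^{\star}_{1\otimes v_s} \subseteq M^{p}$ (for $0<p\leq 2$) by checking the Young-type index relation \eqref{Youngindicesrbig1}: in the quasi-Banach regime $0<r<1$ it forces $p=q=r$, consistent with $a\in M^{p,\infty}$ convolved with $W(\f_2,\f_1)\in M^p$, and the Hölder relation \eqref{Holderindices} is satisfied with $u=\infty$, $t=p$... or $t=\infty$ as the second factor sits in an $\infty$-type space along the direction where $a$ has exponent $\infty$. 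For $1\leq p\leq 2$ one uses $1/p + 1 = 1 + 1/p$ so $r=p$, with $W(\f_2,\f_1)\in M^1$; for $2\leq p\leq\infty$ one targets $M^{p,p'}$ with $W(\f_2,\f_1)\in M^{p',p'}$ and $a\in M^{p,\infty}$, again balancing indices via \eqref{Youngindicesrbig1} and \eqref{Holderindices}. Combining Steps 1 and 2 gives $\|\sigma\|_{M^p}\lesssim \|a\|_{M^{p,\infty}_{1\otimes v_{-s}}}\|\f_1\|_{M^p_{v_s}}\|\f_2\|_{M^p_{v_s}}$ (and the analogues), and Theorem \ref{Charly1} finishes the proof.

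\textbf{Main obstacle.} The delicate point is the bookkeeping of weights and Lebesgue exponents in Step 2: one must verify that the particular weight factorization $m_1\otimes\nu$, $v_1\otimes v_2\nu^{-1}$ required by Proposition \ref{mconvmp} is exactly met by $1\otimes v_{-s}$ together with the weight $1\otimes v_s$ produced by Theorem \ref{T1}, and simultaneously that the mixed-norm exponents of the two factors solve \eqref{Holderindices}–\eqref{Youngindicesrbig1} with the prescribed target. The quasi-Banach constraint $p=q=r$ for $0<r<1$ is what pins down the otherwise-free second exponent of $a$ to be $\infty$ only in the slot where it does no harm, and checking this compatibility — rather than any single inequality — is where the argument could go wrong if the indices in Theorem \ref{T1} were not chosen correctly at the outset.
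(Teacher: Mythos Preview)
Your strategy is exactly the paper's: write $\gaw=L_{a\ast W(\f_2,\f_1)}$, place $W(\f_2,\f_1)$ in a weighted modulation space via Theorem~\ref{T1}, convolve with $a$ via Proposition~\ref{mconvmp}, and finish with Theorem~\ref{Charly1}. Part~(i) is carried out correctly.

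The only slips are the target spaces for $W(\f_2,\f_1)$ in (ii) and~(iii). In~(ii), with $\f_2\in M^p_{v_s}$ and $p>1$, Theorem~\ref{T1} cannot produce $W(\f_2,\f_1)\in M^{1}_{1\otimes v_s}$: condition~\eqref{WIR} would require $p_1=p\le 1$. In~(iii), $W(\f_2,\f_1)\in M^{p'}_{1\otimes v_s}$ does follow from Theorem~\ref{T1}, but it does not feed into the convolution: to land in $M^{p,p'}$ with first factor $a\in M^{p,\infty}$, Young's relation $1/p+1/q=1+1/p$ forces the first exponent of the second factor to be $q=1$, not $p'$. The correct targets are $W(\f_2,\f_1)\in M^{1,p}_{1\otimes v_s}$ for~(ii) and $W(\f_2,\f_1)\in M^{1,p'}_{1\otimes v_s}$ for~(iii) (both follow from Theorem~\ref{T1} with equality in~\eqref{Wigindexsharp}); then Proposition~\ref{mconvmp} with $u=\infty$, $t=p$ (resp.\ $t=p'$), $\nu=v_{-s}$ and $v_1=v_2=1$ gives $\sigma\in M^{p}$ (resp.\ $M^{p,p'}$), and Theorem~\ref{Charly1} concludes. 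This is precisely the index bookkeeping you flagged as the main obstacle, and once corrected your argument coincides with the paper's.
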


\begin{proof}
	({\it i}) If $\f_1 \in  M^{p}_{v_s}(\Ren)$ and $\f_2 \in
	M^{p}_{v_s}(\Ren)$, then   $W(\f_2,\f_1)\in M^{p}_{1\otimes v_{s}}(\Renn)$
	by  \eqref{wigest}.
	Since $a\in M^{p,\infty } _{1\otimes v_{-s}}$, the convolution relation
	$M^{p,\infty} _{1\otimes v_{-s}}(\rdd) \ast M^{p}_{1\otimes v_{s}}(\rdd) \subseteq
	M^{p}(\rdd)$ of Proposition \ref{mconvmp} implies that  the Weyl symbol
	$\sigma=a\ast W(\f_2,\f_1)$ is in $M^{p}(\rdd)$. The  result now
	follows from  Theorem \ref{Charly1} ({\it i}).
	
	The items ({\it ii}) and ({\it ii}) are proved similarly, see \cite[Theorem 3.1]{CG02}.
\end{proof}

\begin{corollary} Any localization operators $\aaf$ with symbol $a$ in $ M^{\infty}_{v_s\otimes 1} (\rdd)$, $s>0$,  and windows $\f_1,\f_2$ in $\cS(\rd)$ is a compact operator belonging to the Schatten class $S_p(\lrd)$, with $p>2d/s$.
\end{corollary}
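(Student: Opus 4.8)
The plan is to deduce the corollary from Theorem \ref{class}(i) together with the two inclusion relations already recorded in the excerpt. First I would recall that the Schwartz class is contained in every modulation space with polynomially bounded weight; in particular, for the fixed parameter $s>0$ we have $\f_1,\f_2\in\cS(\rd)\subset M^p_{v_s}(\rd)$ for any $0<p<1$. Next I would handle the symbol: by the inclusion \eqref{compact} from the remark after Theorem \ref{inclusionG}, namely $M^{\infty}_{v_s\otimes 1}(\rdd)\subset M^{p,\infty}(\rdd)$ whenever $p>2d/s$, and since $M^{p,\infty}(\rdd)\subseteq M^{p,\infty}_{1\otimes v_{-s}}(\rdd)$ (because $v_{-s}\leq 1$ pointwise, so the weaker weight gives a larger space), we obtain $a\in M^{p,\infty}_{1\otimes v_{-s}}(\rdd)$ for every $p>2d/s$.

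With these memberships in hand, the main step is a direct invocation of Theorem \ref{class}(i): for any $p$ with $0<p<1$ and $p>2d/s$ we get $\aaf\in S_p(\lrd)$ with the quantitative bound
\[
\|\aaf\|_{S_p}\lesssim \|a\|_{M^{p,\infty}_{1\otimes v_{-s}}}\|\f_1\|_{M^p_{v_s}}\|\f_2\|_{M^p_{v_s}}<\infty.
\]
Finally I would upgrade to all $p>2d/s$ (including $p\geq 1$) by the nesting of the Schatten classes $S_p\subseteq S_q$ for $p\leq q$ recalled in Subsection 2.1: once $\aaf\in S_{p_0}$ for some $p_0<1$ with $p_0>2d/s$, it lies in $S_q$ for every $q\geq p_0$, hence in $S_p$ for every $p>2d/s$. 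Compactness is then automatic, since $S_p\subset S_\infty$ consists of compact operators for $p<\infty$ (alternatively one may cite the compactness results in \cite{FG2006}).

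The only mild subtlety — not really an obstacle — is checking that there genuinely exists $p$ with $0<p<1$ and $p>2d/s$, i.e. that the hypothesis $s>0$ of the corollary does not force $2d/s\geq 1$; but this is not needed, because if $2d/s\geq 1$ one simply picks any $p\in(2d/s,2d/s+\varepsilon)$, applies Theorem \ref{Charly1} and the convolution relations directly in the Banach range exactly as in the proof of Theorem \ref{class}(ii)–(iii), or more economically observes that for $p\geq 1$ one can still run the argument of Theorem \ref{class}(i) verbatim using Theorem \ref{Charly1}(i) for $1\leq p\leq 2$ and Proposition \ref{mconvmp}; the same chain of inclusions $a\in M^{p,\infty}_{1\otimes v_{-s}}$, $\f_j\in\cS\subset M^p_{v_s}$, $W(\f_2,\f_1)\in M^p_{1\otimes v_s}$, and $\sigma=a\ast W(\f_2,\f_1)\in M^p$ goes through unchanged. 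Thus the statement holds for the full range $p>2d/s$, and the proof is essentially a bookkeeping of inclusions feeding into Theorem \ref{class}.
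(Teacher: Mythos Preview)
Your proof is correct and follows essentially the same route as the paper: the inclusion \eqref{compact} feeds the symbol into $M^{p,\infty}$ (hence into $M^{p,\infty}_{1\otimes v_{-s}}$), the Schwartz windows land in the required modulation spaces, and Theorem \ref{class} does the rest. The only cosmetic difference is that the paper invokes Theorem \ref{class} in full (parts (i)--(iii), according to the range of $p$) so that every $p>2d/s$ is covered directly, whereas you first restrict to part (i) and then patch the case $2d/s\geq 1$ separately via Schatten nesting or parts (ii)--(iii); applying the appropriate item of Theorem \ref{class} from the start makes the case split and the nesting step unnecessary.
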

\begin{proof} It immediately follows from  the inclusion relations for modulation spaces in  \eqref{compact} and the sufficient conditions in Theorem \ref{class}.
\end{proof}

\section{Necessary Conditions}
The necessary conditions for Schatten class localization operators  for the Banach case $p\geq 1$ is contained in the work \cite[Theorem 1 (b)]{CG05}, see also \cite{FG2006} , who recaptured the results in \cite[Theorem 1 (b)]{CG05} by using different techniques.  
Before stating the necessary conditions, observe that using the inclusion relations for modulation spaces in Theorem \ref{inclusionG}, one can rephrase the unweighted  sufficient conditions in Theorem \ref{class} as follows.

\begin{theorem}\label{classLargercond}
	If $1 \leq p  \leq \infty$, then  the mapping $(a,\f _1, \f _2)
	\mapsto \aaf $ is  bounded  from $M^{p,\infty }(\rdd ) \times M^1
	(\rd )\times M^1(\rd )$ into $S_p(\lrd)$, i.e.,
	$$\|\gaw\|_{S_p}\leq C
	\|a\|_{M^{p,\infty}}\|\f_1\|_{M^1}\|\f_2\|_{M^1}\,$$ for a
	suitable constant $C>0$.
\end{theorem}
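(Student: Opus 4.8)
The plan is to obtain Theorem \ref{classLargercond} as an immediate corollary of Theorem \ref{class} specialized to $s=0$, combined with the monotonicity of modulation-space (quasi-)norms in the first Lebesgue exponent (Theorem \ref{inclusionG}). Setting $s=0$ kills the weights, since $v_0\equiv 1$ and hence $M^{p,\infty}_{1\otimes v_0}(\rdd)=M^{p,\infty}(\rdd)$, $M^1_{v_0}(\rd)=M^1(\rd)$, $M^p_{v_0}(\rd)=M^p(\rd)$, and $M^{p'}_{v_0}(\rd)=M^{p'}(\rd)$. Thus Theorem \ref{class}(ii) yields boundedness of $(a,\f_1,\f_2)\mapsto\aaf$ from $M^{p,\infty}(\rdd)\times M^1(\rd)\times M^p(\rd)$ into $S_p$ for $1\le p\le 2$, and Theorem \ref{class}(iii) yields boundedness from $M^{p,\infty}(\rdd)\times M^1(\rd)\times M^{p'}(\rd)$ into $S_p$ for $2\le p\le\infty$.

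The first main step is to treat the range $1\le p\le 2$. Since $1\le p$, Theorem \ref{inclusionG} (applied with $p_1=q_1=1$, $p_2=q_2=p$) gives the continuous embedding $M^1(\rd)\hookrightarrow M^p(\rd)$, so that $\|\f_2\|_{M^p}\lesssim\|\f_2\|_{M^1}$. Plugging this into the estimate of Theorem \ref{class}(ii) produces $\|\gaw\|_{S_p}\lesssim\|a\|_{M^{p,\infty}}\|\f_1\|_{M^1}\|\f_2\|_{M^1}$, as desired.

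The second main step is the range $2\le p\le\infty$. Here the conjugate exponent satisfies $1\le p'\le 2$, so again Theorem \ref{inclusionG} gives $M^1(\rd)\hookrightarrow M^{p'}(\rd)$ and hence $\|\f_2\|_{M^{p'}}\lesssim\|\f_2\|_{M^1}$; inserting this into the estimate of Theorem \ref{class}(iii) gives the claim on $[2,\infty]$. Taking the two ranges together covers all $1\le p\le\infty$, with $C$ depending only on $p$ and $d$.

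I expect no genuine obstacle: the statement is purely a repackaging of Theorem \ref{class}, and the only points that merit a word of care are the overlap at $p=2$ (where both ranges apply and are consistent, since $M^2=M^{2'}$) and the endpoint $p=\infty$ (where $p'=1$ and the embedding $M^1\hookrightarrow M^{p'}$ is the identity); neither is problematic. If one preferred, the intermediate range could instead be reached by complex interpolation between $p=1$, $p=2$ and $p=\infty$, but the embedding argument above is shorter and self-contained.
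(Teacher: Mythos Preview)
Your proposal is correct and follows essentially the same approach as the paper: specialize Theorem \ref{class} to $s=0$ and then use the inclusion $M^1(\rd)\hookrightarrow M^q(\rd)$ for $q\ge 1$ (Theorem \ref{inclusionG}) to replace the $M^p$- or $M^{p'}$-norm of $\f_2$ by its $M^1$-norm. The paper's proof is terser---it states only the inequality $\|\f_2\|_{M^p}\le\|\f_2\|_{M^1}$---whereas you have spelled out both ranges $1\le p\le 2$ and $2\le p\le\infty$ separately, but the argument is the same.
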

\begin{proof} The inequality immediately follows from Theorem \ref{class} and the estimate $\|\f_2\|_p\leq \|\f_2\|_1$, for any $p> 1$, by the inclusion relation $M^1(\rd)\subset M^p(\rd)$.
\end{proof}

The vice versa of the sufficient conditions above is shown hereafter.
\begin{theorem} \label{main}
Consider $1\leq p\leq \infty$. If $\gaw\in S_p(\lrd)$ for every pair of  windows $\f_1,\f_2\in\cS(\rd)$ with norm estimate 
	\begin{equation}\label{mt}
	\|\gaw\|_{S_{p}}\leq  C\, \|\f_1\|_{ M^1}\, \|\f_2\|_{ M^1},
	\end{equation}
	where the constant $C>0$  depends only on the symbol  $a$, then $a\in M^{p,\infty}(\rdd)$.
\end{theorem}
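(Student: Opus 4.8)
The plan is to extract the symbol $a$ from the localization operator by testing the Schatten norm estimate \eqref{mt} against carefully chosen rank-one operators built from \tfs s of the Gaussian, and then to recognize the resulting quantity as (a discretization of) the $M^{p,\infty}$-norm of $a$. Concretely, fix the Gaussian $\f=2^{d/4}e^{-\pi x^2}$ from \eqref{fi} and, for $\lambda,\mu\in\rd$, take as windows the \tfs s $\f_1=M_{\mu_1}T_{\lambda_1}\f$ and $\f_2=M_{\mu_2}T_{\lambda_2}\f$ with $(\lambda,\mu)=(\lambda_1,\lambda_2,\mu_1,\mu_2)\in\rdd\times\rdd$. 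Since \tfs s act isometrically on $M^1(\rd)$, the right-hand side of \eqref{mt} is just $C\|\f\|_{M^1}^2$, a constant independent of $(\lambda,\mu)$. So the whole game is to bound the $M^{p,\infty}$-type quantity of $a$ from below by $\|\gaw\|_{S_p}$ uniformly in these shifts.

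The key computation is the weak-form identity \eqref{anti-Wickg}: $\la \gaw f,g\ra = \la a, \overline{V_{\f_1}f}\, V_{\f_2}g\ra$. I would pair $\gaw$ against $f=g=$ a third fixed Gaussian \tfs, so that $\la \gaw f,g\ra$ becomes $\la a, F\ra$ where $F=\overline{V_{\f_1}f}\, V_{\f_2}g$. Using the transformation rules \eqref{eqr4}--\eqref{bo}, which tell us exactly how $\overline{V_{\f_1}f}\, V_{\f_2}g$ transforms under translations and modulations of the arguments $f,g,\f_1,\f_2$, the product $\overline{V_{\f_1}f}\, V_{\f_2}g$ for the shifted windows equals a \tfs\ (in the $\rdd$ variables $(x,\o)$) of the unshifted product $\overline{V_\f f}\, V_\f g$. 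With $f=g=\f$, the unshifted product is, up to a unimodular factor, a Gaussian on $\rdd$ — essentially $|V_\f\f|^2$ times a chirp, cf. \eqref{Gauss0} — which is (a multiple of) a fixed window $\Psi\in\cS(\rdd)$. Hence $\la \gaw f,g\ra = c\, \overline{V_\Psi a(z,\zeta)}$ for the appropriate point $(z,\zeta)\in\rddd$ determined by $(\lambda,\mu)$, where the map $(\lambda,\mu)\mapsto(z,\zeta)$ is an affine bijection of $\rddd$. Now rank-one operator estimates: the operator $u\mapsto \la u,g\ra\, \gaw f /\|\gaw f\|$ ... more directly, for the specific $f,g$ chosen, $|\la\gaw f,g\ra|\le \|\gaw\|_{S_p}\|f\|_{L^2}\|g\|_{L^2}$ by the mapping property of singular values (the largest singular value dominates, and $\|\gaw\|_{S_\infty}\le\|\gaw\|_{S_p}$ since the classes are nested). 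Combining, $|V_\Psi a(z,\zeta)|\lesssim \|\gaw\|_{S_p}\lesssim C$ for all $(z,\zeta)$, which gives $a\in M^{\infty}(\rdd)\subset M^{\infty,\infty}$.

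To get the sharper conclusion $a\in M^{p,\infty}$ rather than merely $M^\infty$, I would instead test against a frame rather than a single vector. Use the Gabor frame $(M_{\beta n}T_{\alpha k}\Phi)_{k,n}$ on $\lrdd$ with $\Phi$ as in \eqref{Phi} and $\alpha\beta<1$. By Theorem \ref{framesmod}, $\|a\|_{M^{p,\infty}(\rdd)}\asymp \|(\la a, M_{\beta n}T_{\alpha k}\Phi\ra)_{n,k}\|_{\ell^{p,\infty}(\bZ^{4d})}$. Each inner product $\la a, M_{\beta n}T_{\alpha k}\Phi\ra$ should, by the identity above with $\Phi = V_g g$ and suitable $g$ (so that $\overline{V_{\f_1}f}V_{\f_2}g$ is a \tfs\ of $\Phi$), equal $\pm\la \gaw_{(n,k)}f_{(n,k)},g_{(n,k)}\ra$ for Gaussian-\tfs\ windows indexed by the lattice. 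Then I would invoke Lemma \ref{ppp} (valid for $1\le p\le\infty$): the lattice points $(n,k)$ correspond under the affine bijection to a lattice in $\rddd$, and the vectors $f_{(n,k)}, g_{(n,k)}$ form (pieces of) Gabor-type frames, so $\sum_{n,k}|\la\gaw_{(n,k)}f_{(n,k)},g_{(n,k)}\ra|^p\lesssim B\|\gaw\|_{S_p}^p$ — but here one must be careful because the windows $\f_1,\f_2$ themselves vary with $(n,k)$, not just $f$ and $g$. The trick, as in \cite[Theorem 1]{CG05}, is that the operators $\gaw$ for shifted windows are themselves \emph{conjugates} of a single operator by \tfs s, i.e. $A_a^{M_{\mu_1}T_{\lambda_1}\f, M_{\mu_2}T_{\lambda_2}\f} = M T\, \gaw\, (MT)'$ up to phases, which have the same singular values; so the varying-window sums collapse to frame sums for a fixed operator. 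Taking the supremum over the "$\o$"-block of the lattice and $\ell^p$ over the "$x$"-block reproduces the $\ell^{p,\infty}$-norm, giving $\|a\|_{M^{p,\infty}}\lesssim \|\gaw\|_{S_p}\lesssim C$.

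The main obstacle I anticipate is the bookkeeping in the previous paragraph: correctly tracking how the affine change of variables $(\lambda,\mu)\mapsto(z,\zeta)$ interacts with the Gabor lattice $\alpha\zd\times\beta\zd$ on $\rdd$ versus the lattice needed on $\rddd$, and verifying that the unitary conjugation identity for $\gaw$ under \tfs s of the windows holds with only unimodular prefactors (so Schatten norms are preserved). One must also check that the "leftover" Gaussian product $\overline{V_\f f}V_\f g$ is genuinely (a constant times) the frame window $\Phi$ of \eqref{Phi} — this forces specific choices of $f,g$ and possibly a metaplectic/dilation adjustment — and that the frame bound $B$ absorbed along the way is finite and independent of the lattice index. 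Once these identifications are in place, the estimate is immediate from Lemma \ref{ppp} and Theorem \ref{framesmod}; the case $p=\infty$ is the single-vector argument of the second paragraph, and the intermediate $1\le p<\infty$ cases follow the same template. I expect the write-up to mirror \cite[Theorem 1(b)]{CG05} almost verbatim, the present statement being its unweighted specialization.
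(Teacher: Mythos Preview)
Your overall architecture matches the paper's proof: discretize $\|a\|_{M^{p,\infty}}$ via the Gabor frame $(M_{\beta n}T_{\alpha k}\Phi)_{n,k}$ on $\lrdd$ (Theorem~\ref{framesmod}), identify each coefficient $\la a, M_{\beta n}T_{\alpha k}\Phi\ra$ with a matrix entry of a localization operator through the weak form \eqref{anti-Wickg} and the transformation rule \eqref{bo}, then control the $\ell^p$-sum over $k$ by the frame inequality (Corollary~\ref{adj4}) and take $\sup$ over $n$.

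However, your anticipated ``obstacle'' and proposed fix are both off. You write that the windows $\f_1,\f_2$ vary with the full lattice index $(n,k)$, and propose to salvage the situation via a unitary conjugation identity ``$A_a^{M_{\mu_1}T_{\lambda_1}\f,\, M_{\mu_2}T_{\lambda_2}\f} = MT\, A_a^{\f,\f}\, (MT)'$ up to phases''. That identity is \emph{false} in general: shifting the two windows independently does not produce a unitary conjugate of $A_a^{\f,\f}$ (it changes the symbol, not just the frame of reference), so singular values are \emph{not} preserved. If you tried to push the argument through this route you would not be able to collapse the sum to a single operator.

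The point you are missing --- and the actual mechanism in the paper --- is that the separation between $n$ and $k$ is already built into formula \eqref{bo}. Writing $\Phi=|V_\f\f|^2=\overline{V_\f\f}\cdot V_\f\f$ and applying \eqref{bo} gives
\[
\la a, M_{\beta n}T_{\alpha k}\Phi\ra
=\bigl\la A_a^{\f,\ M_{\beta n_1}T_{-\beta n_2}\f}\bigl(M_{\alpha k_2}T_{\alpha k_1}\f\bigr),\ M_{\beta n_1}T_{-\beta n_2}\bigl(M_{\alpha k_2}T_{\alpha k_1}\f\bigr)\bigr\ra.
\]
So the analysis window is always $\f$, the synthesis window depends only on $n=(n_1,n_2)$, and the lattice index $k=(k_1,k_2)$ enters \emph{only} through the Gabor frame vectors $b_k=M_{\alpha k_2}T_{\alpha k_1}\f$. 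For each fixed $n$ the operator $T=A_a^{\f,\ M_{\beta n_1}T_{-\beta n_2}\f}$ is fixed, the hypothesis \eqref{mt} gives $\|T\|_{S_p}\le C\|\f\|_{M^1}^2$ uniformly in $n$, and Corollary~\ref{adj4} with $L=M_{\beta n_1}T_{-\beta n_2}\in S_\infty$ bounds $\sum_k|\la T b_k, L b_k\ra|^p$. Taking $\sup_n$ yields $\|a\|_{M^{p,\infty}}<\infty$. No conjugation trick is needed, and none of the bookkeeping you worry about (affine change of variables on $\bR^{4d}$, metaplectic adjustments) arises.
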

In what follows we detail the main steps of the proof, in order to underline the tools employed. The  key role is played by Corollary \ref{adj4}, togetheer with the characterization of the $M^{p,\infty}(\rdd)$-norm of the symbol $a$ via Gabor frames.\par 
\vspace{0.3truecm}
\noindent
\emph{  Sketch of the proof of Theorem \ref{main}.}\\
Consider $0<\a,\b<1$, $\Phi\phas= 2^{-d}e^{-x^2-\o^2}\in\cS(\rdd)$ and the  Gabor frame $(T_{\a k}M_{\b n}\Phi)_{n,k\in\zdd}$. We compute the $M^{p,\infty}(\rdd)$-norm of the symbol  $a$ in $\gaw$   by using the norm characterization in \eqref{idea}
\begin{equation}\label{idea2}
\|a\|_{M^{p,\infty}(\rdd)}\asymp \|\la a ,M_{\b n}T_{\a
	k}\Phi\ra_{n,k\in\zdd}\|_{\ell^{p,\infty}(\bZ^{4d})}.
\end{equation}
Using \eqref{Gauss0}  we can write
\begin{equation}\label{Gauss?}\Phi\phas=2^{-d}e^{-\pi
	(x^2+\o^2)}=V_\f \f\phas\overline{V_\f \f\phas}.\end{equation}
Now, let $k=(k_1,k_2), n=(n_1,n_2)\in\zdd$, by \eqref{Gauss?} and
Formula \eqref{bo}, the \tfs \, of $\Phi$ can be expressed by the
point-wise product of two STFTs:
\begin{eqnarray*}M_{\b n}T_{\a k}\Phi\phas&=&M_{ (\b n_1,\b n_2)}T_{ (\a k_1,\a
		k_2)}(V_\f \f\overline{V_\f \f})\phas\\
	&=&V_{(M_{\b n_1}T_{-\b
			n_2} \f)}(M_{\b n_1} T_{-\b
		n_2} M_{\a k_2}T_{\a
		k_1} \f)\cdot\overline{V_\f (M_{\a k_2}T_{\a
			k_1} \f)}.
\end{eqnarray*}
Using the weak definition of localization operator given in
\eqref{anti-Wickg}, we can write
\begin{equation}\label{???}
\la a, M_{\b n}T_{\a k}\Phi\ra=\la A_a^{\f,(M_{\b n_1}T_{-\b
		n_2} \f)}(M_{\a k_2}T_{\a
	k_1} \f), M_{\b n_1} T_{-\b
	n_2} M_{\a k_2}T_{\a
	k_1} \f\ra.
\end{equation}
The $M^{p,\infty}$-norm of the symbol $a$ can be recast as
\begin{eqnarray*}
	\|a\|_{M^{p,\infty}}\!\!&\asymp&\|\la a,M_{\b n}T_{\a
		k}\Phi\ra_{n,k\in\zdd}\|_{\ell^{p,\infty}(\bZ^{4d})}\\
	&=&\sup_{n\in\zdd}\left(\sum_{k \in\zdd}|\la a,M_{\b n}T_{\a
		k}\Phi\ra|^p\right)^{1/p}\\
	&=&\!\!\!\sup_{(n_1,n_2)\in\zdd}\!\left(\sum_{(k_1,k_2)
		\in\zdd}\!\!|\la A_a^{\f,(M_{\b n_1}T_{-\b
			n_2} \f)}(M_{\a k_2}T_{\a
		k_1} \f), M_{\b n_1} T_{-\b
		n_2} M_{\a k_2}T_{\a
		k_1} \f\ra|^p\right)^{1/p}
\end{eqnarray*}
We apply the assumption \eqref{mt} to the
localization operators $A_a^{\f,(M_{\b n_1}T_{-\b
		n_2} \f)}$; in fact, for every choice of
	$\b,n_1,n_2$, the functions $M_{\b n_1}T_{-\b
n_2} \f$  are in the Schwartz class  $\cS(\rd)$, so that the localization operators satisfy the uniform estimate
\begin{equation}\label{e3} \| A_a^{\f,(M_{\b n_1}T_{-\b
		n_2} \f)}\|_{S_p}\le C \|\f\|_{M^1} \|M_{\b n_1}T_{-\b
	n_2} \f\|_{M^1}= C\|\f\|_{M^1}^2,\end{equation}
since the time-frequency shifts are isometry on $M^1(\rd)$.

Finally, applying  Corollary \ref{adj4} with the Gabor frame  $(M_{\a k_2} T_{\a k_1} \f )_{k_1,k_2\in\zd}$ and operators $T=A_a^{\f,(M_{\b n_1}T_{-\b
		n_2} \f)}\in S_p$ and $L=M_{\b n_1} T_{-\b
	n_2}\in S_\infty$,
we can majorize the norm $\|a\|_{M^{p,\infty}}$ as 
\begin{align*}
	&\|a\|_{M^{p,\infty}}\\
	&\,\,\asymp\!\!\sup_{(n_1,n_2)\in\zdd}\!\!\|\la
	A_a^{\f,(M_{\b n_1}T_{-\b
			n_2} \f)}(M_{\a k_2}T_{\a
		k_1} \f), M_{\b n_1} T_{-\b
		n_2} M_{\a k_2}T_{\a
		k_1} \f\ra_{(k_1,k_2) \in\zdd}\|_{\ell^p(\zdd)}\\
	&\,\,\lesssim \sup_{(n_1,n_2)\in\zdd}\| A_a^{\f,(M_{\b n_1}T_{-\b
			n_2} \f)}\|_{S_p}\\
	&\,\,\lesssim \sup_{(n_1,n_2)\in\zdd} \|\f\|^2_{M^1}= \|\f\|^2_{M^1}<\infty,
\end{align*}
where in the last inequality we used \eqref{e3}.
\endproof\\
\subsection{Conclusion and Perspectives}
As it becomes clear from the previous proof, we cannot expect to prove necessary conditions for small $p$, that is $0<p<1$, using similar techniques to the case $p\geq 1$. The main obstruction being the fact that Corollary \ref{adj4} does not hold for $0<p<1$. Observe that the discrete modulation norm via Gabor frames in \eqref{idea2} remains valid  also for $0<p<1$.
In view of the sufficient conditions in Theorem \ref{class}, we conjecture that a necessary condition of the type expressed below should hold true.
\begin{theorem}\label{classLargercond}
	For $0<p<1$, if  $\gaw$ is in $S_p(\lrd)$ for every pair of windows $\f_1,\f_2\in\cS(\rd)$ and there exists a $C>0$ such that 
	$$\|\gaw\|_{S_p}\leq C
	\|\f_1\|_{M^p}\|\f_2\|_{M^p},\quad \f_1,\f_2\in\cS(\rd),$$ then $a\in M^{p,\infty}(\rdd)$.
\end{theorem}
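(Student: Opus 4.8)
The plan is to run the proof of Theorem~\ref{main} verbatim up to the point where Corollary~\ref{adj4} is invoked, and then to supply a quasi-Banach substitute for that corollary adapted to the operators that actually occur. Fix $0<\a,\b<1$, the Gaussian $\Phi\phas=2^{-d}e^{-\pi(x^2+\o^2)}$ and the Gabor frame $(M_{\b n}T_{\a k}\Phi)_{n,k\in\zdd}$. By \cite[Theorem~3.7]{Galperin2004} the discrete characterization of the modulation norm (Theorem~\ref{framesmod}, formula~\eqref{idea2}) is valid for all $0<p\le\infty$, so $\|a\|_{M^{p,\infty}}\asymp\sup_{n}\|(\la a,M_{\b n}T_{\a k}\Phi\ra)_{k}\|_{\ell^{p}(\zdd)}$. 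Writing $\Phi=V_\f\f\,\overline{V_\f\f}$ via \eqref{Gauss0} and using \eqref{bo} and the weak definition \eqref{anti-Wickg}, one reaches, exactly as in the sketch of Theorem~\ref{main}, the relation
\begin{equation}\label{pp-red}
\la a,M_{\b n}T_{\a k}\Phi\ra=\la S_n\,(M_{\a k_2}T_{\a k_1}\f),\,M_{\a k_2}T_{\a k_1}\f\ra,\qquad S_n:=(M_{\b n_1}T_{-\b n_2})^{\ast}\,A_a^{\f,(M_{\b n_1}T_{-\b n_2}\f)}.
\end{equation}
Since \tfs s are isometries on $M^p(\rd)$ and unitary on $\lrd$, the hypothesis gives the uniform bound $\|S_n\|_{S_p}=\|A_a^{\f,(M_{\b n_1}T_{-\b n_2}\f)}\|_{S_p}\le C\,\|\f\|_{M^p}^{2}$, so the statement is reduced to
\begin{equation}\label{pp-goal}
\sup_{n}\Big(\sum_{k\in\zdd}\big|\la S_n\,(M_{\a k_2}T_{\a k_1}\f),\,M_{\a k_2}T_{\a k_1}\f\ra\big|^{p}\Big)^{1/p}\lesssim\sup_{n}\|S_n\|_{S_p},
\end{equation}
i.e.\ to extending Lemma~\ref{ppp}/Corollary~\ref{adj4} to the range $0<p<1$ for the Gaussian Gabor frame and the particular operators $S_n$.

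Inequality~\eqref{pp-goal} is precisely where the quasi-Banach case departs from the Banach one: for an \emph{arbitrary} operator in $S_p$ with $0<p<1$ it fails, by the rank-one construction of Remark~\ref{counterexp}, so the proof must genuinely exploit that the $S_n$ are unitary conjugates of localization operators with Schwartz windows. The structural input I would use is phase-space localization. By \eqref{WA}, $A_a^{\f,(M_{\b n_1}T_{-\b n_2}\f)}=L_{\sigma_n}$ with $\sigma_n=a\ast W(M_{\b n_1}T_{-\b n_2}\f,\f)$, and by the covariance of the cross-Wigner distribution the factors $W(M_{\b n_1}T_{-\b n_2}\f,\f)$ form, as $n$ varies, the orbit of $W(\f,\f)\in\cS(\rdd)$ under \tf\ shifts, hence a bounded subset of $M^{1}(\rdd)$. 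One then wants: if $T=L_\sigma\in S_p$ with $\sigma=a\ast\Psi$ and $\Psi$ in a bounded subset of $M^1(\rdd)$, then the Gabor matrix of $T$ with respect to $(M_{\a k_2}T_{\a k_1}\f)_k$ is dominated by one fixed kernel in $\ell^{p}(\zdd)$, and for such convolution-dominated operators the Gabor-frame diagonal lies in $\ell^{p}$ with quasinorm $\lesssim\|T\|_{S_p}$. Such an almost-diagonalization estimate, fed back through \eqref{pp-red}--\eqref{pp-goal}, would close the argument.

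The main obstacle — and the reason the statement is only conjectured — is a circularity: the off-diagonal decay of the Gabor matrix of $L_{\sigma_n}$ needed above is naturally extracted from Theorem~\ref{Charly1}(i) together with the convolution relations of Proposition~\ref{mconvmp}, and these presuppose membership of $a$ in a modulation space, which is the very conclusion sought. Two escape routes seem worth pursuing. First, a contradiction/compactness scheme: if $a\notin M^{p,\infty}$, extract $n_j$ along which the diagonals in \eqref{pp-goal} blow up in $\ell^p$, and then use the hypothesis for \emph{all} windows $\f\in\cS(\rd)$ — the proof of Theorem~\ref{main} uses it only for the single Gaussian and its $\b$-shifts — to show that a genuine rank-one-type defect at one window pair propagates, via deconvolution of cross-Wigner distributions, into a violation of the uniform $S_p$-bound at another window pair. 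Second, a direct Weyl-side approach: prove a partial converse to Theorem~\ref{Charly1}(i) valid only for regularized symbols, namely that $L_\sigma\in S_p$ with $\sigma=a\ast\Psi$ and $\Psi$ in a bounded family in $M^1(\rdd)$ forces $\sigma\in M^{p}$ with a norm bound uniform in $\Psi$ (this is false without the mollification, by Remark~\ref{counterexp}, but should become true once $a$ is smoothed, since a tempered $a$ with $a\ast W(\f,\f)$ of pathological type would have to be extremely irregular), and then deconvolve along the Gabor frame as in the first paragraph. In either route the heart of the matter is the same: a quasi-Banach replacement for Corollary~\ref{adj4} valid for localization operators, which I expect to be the single genuinely new ingredient the proof requires.
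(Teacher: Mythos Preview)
The paper does not prove this statement: it is explicitly presented as a conjecture (``we conjecture that a necessary condition of the type expressed below should hold true''), with no proof given. There is therefore no ``paper's own proof'' to compare against.

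Your proposal is not a proof either, and you are candid about this. The reduction you carry out --- rewriting $\la a,M_{\b n}T_{\a k}\Phi\ra$ via \eqref{Gauss0}, \eqref{bo}, \eqref{anti-Wickg} and arriving at the need for a frame-diagonal estimate of the form \eqref{pp-goal} --- is exactly the argument the paper gives for $p\ge 1$ in the sketch of Theorem~\ref{main}, and your identification of the obstruction (Lemma~\ref{ppp}/Corollary~\ref{adj4} fail for $0<p<1$, by Remark~\ref{counterexp} and the Bingyang--Khoi--Zhu proposition) matches precisely the paper's own diagnosis in the subsection ``Conclusion and Perspectives''. So up to the gap, you and the paper agree completely.

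Where you go beyond the paper is in sketching two possible ``escape routes'' (almost-diagonalization of the Gabor matrix of $L_{\sigma_n}$; a partial converse to Theorem~\ref{Charly1}(i) for mollified symbols). These are plausible directions, but as you yourself note, both currently run into a circularity: the off-diagonal decay or the $M^p$-membership of $\sigma_n$ you want to exploit is exactly what would follow from $a\in M^{p,\infty}$, the conclusion you are trying to reach. Neither route, as written, breaks that circle, and the paper offers no mechanism to do so. In short: your write-up is an accurate summary of why the conjecture is open, together with some heuristics for attacking it, but it is not a proof --- and neither is anything in the paper.
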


\section*{Acknowledgments} The author wish to thank Prof. Fabio Nicola  for his suggestions and comments.

\end{document}